\documentclass[12pt,reqno]{amsart}
\usepackage{amsmath,amsfonts,graphicx,amscd,amssymb,epsf,amsthm,enumerate, mathrsfs,color,ltablex,blindtext}  
\usepackage[a4paper, margin=1in]{geometry}
\usepackage{times}
\usepackage[margin=10pt,font=small,labelfont=bf]{caption}
\usepackage{matlab-prettifier}
\usepackage{tcolorbox}
\theoremstyle{definition}
\newtheorem{theorem}{Theorem}[section]
\newtheorem{proposition}[theorem]{Proposition}
\newtheorem{lemma}[theorem]{Lemma}

\newtheorem{remark}[theorem]{Remark}
\numberwithin{equation}{section}
\numberwithin{equation}{section}

\usepackage{hyperref}
\hypersetup{
	colorlinks=true,
	linkcolor=blue,
	citecolor=blue,
}

\linespread{1.2}

\newcommand{\di}{\displaystyle}

\newcommand{\mb}{\mathbb}

\begin{document}
	
	\title[Explicit  Bounds of $|\zeta\left(1+it\right)|$]{Explicit  Bounds of $\pmb{|\zeta\left(1+it\right)|}$}
	\author{Eunice Hoo Qingyi and Lee-Peng Teo}
	\address{Department of Mathematics, Xiamen University Malaysia\\Jalan Sunsuria, Bandar Sunsuria, 43900, Sepang, Selangor, Malaysia.}
	\email{MAM2209001@xmu.edu.my, lpteo@xmu.edu.my}
	
	\begin{abstract}
		In this work, we study a bound of the form $|\zeta(1+it)|\leq v\log t$ for  $t\geq t_0$. We show that the exponential sum method with second order derivatives can achieve  any $v >\frac{1}{2}$ as long as $t_0$ is sufficiently large.    Using the Riemann--Siegel formula and numerical computations, we show that when $t\geq e$, 
		\[|\zeta(1+it)|\leq\frac{1}{2}\log t+0.6633. \] This allows us to show that   
		\[|\zeta(1+it)|\leq 0.6443 \log t\quad \text{when}\;\;t\geq e. \] This is the best possible result of the form $|\zeta(1+it)|\leq v\log t$ that holds for all  $t\geq e$, as the equality is achieved when $t=17.7477$.  
		
	\end{abstract}
	\subjclass{11M06, 11L07}
	\keywords{Riemann zeta function, exponential sums, Riemann--Siegel formula, explicit bounds}
	\maketitle
	\section{Introduction}
	
	The Riemann zeta function $\zeta(s)$ is defined as
	\[\zeta(s)=\sum_{n=1}^{\infty}\frac{1}{n^s}\] when $s=\sigma+it$ is a complex number with $\text{Re}\, s>1$. It has an analytic continuation to the complex plane $\mb{C}$ except for a simple pole at $s=1$.  For more details on the Riemann zeta function, one can refer to the books   \cite{Edwards, Titchmarsh, Ivic_Book}.
	
	 It is a classical result that $\zeta(1+it)\neq 0$ for any real number $t\neq 0$. In this work, we consider   upper bounds for $|\zeta(1+it)|$. This has been a problem of interest for more than 100 years. 
		It was first shown by Mellin \cite{Mellin_1902} that
	\[\zeta(1+it)=O(\log t).\]
	In  \cite{Weyl_1921}, Weyl proved that
	\[\zeta(1+it)=O\left(\frac{\log t}{\log\log t}\right).\] 
	Using an improved form of his own mean value theorem, Vinogradov \cite{Vinogradov_58}  showed that
	\[\zeta(1+it)=O\left((\log t)^{\frac{2}{3}}\right).\] If we believe that the Riemann hypothesis is true, the order is much smaller.  Littlewood \cite{Littlewood_1925} proved that \[\zeta(1+it)=O(\log\log t)\] if the Riemann hypothesis is true (see  also Section 14 of \cite{Titchmarsh}).
	For  unconditional explicit upper bounds, Landau \cite{Landau_03} showed that  \[|\zeta(1+it)|\leq 2\log t \hspace{1cm}\text{when}\;t\geq 10.\] Later Backlund \cite{Backlund} proved that  
	\[|\zeta(1+it)|\leq\log t\hspace{1cm}\text{when}\;t\geq 50.\]
	In   \cite{Ford_2002}, Ford refined Vinogradov's method and  showed  that
	\[|\zeta(1+it)|\leq 76.2 \left(\log t\right)^{\frac{2}{3}}\hspace{1cm}\text{when}\;t\geq 3.\]In fact, he obtained a bound of $|\zeta(\sigma+it)|$ for all $\frac{1}{2}\leq\sigma\leq 1$ and $t\geq 3$. 
	Trudgian \cite{Trudgian_2014} noticed that the constant $76.2$ can be reduced to $62.6$ when $\sigma=1$. In the same paper  \cite{Trudgian_2014}, Trudgian claimed  that 
	\begin{equation}\label{20241022_1}|\zeta(1+it)|\leq \frac{3}{4}\log t\hspace{1cm}\text{when}\;t\geq 3.\end{equation}
	Unfortunately, the result was obtained using a wrong lemma in \cite{ChengGraham2004}.
	In   \cite{Patel2022}, Patel pointed out the error and showed that
	\[|\zeta(1+it)|\leq\min\left\{\log t, \frac{1}{2}\log t+1.93, \frac{1}{5}\log t+44.02\right\}.\]
	Notice that 
	\[\log t\geq \frac{1}{2}\log t+1.93\] if and only if $t\geq 47.47$, and 
	\[\frac{1}{2}\log t+1.93\geq \frac{1}{5}\log t+44.02\]if and only if $t\geq 8.54\times 10^{60}$.  Thus, the bound
	\[|\zeta(1+it)|\leq \frac{1}{5}\log t+44.02\] might not be that useful from the point of view of applications. 
	
	In this work, we  first consider the best upper bound we can obtain for $|\zeta(1+it)|$ using only the  representation of the Riemann zeta function (see Section \ref{sec2})
\begin{align*}\zeta(s) &=\sum_{n\leq x}\frac{1}{n^{s}}+\frac{x^{1-s}}{s-1} +B_1 (\{x\}   )x^{-s}+\frac{s}{2}B_2(\{x\})x^{-s-1}-\frac{s(s+1)}{2}\int_x^{\infty}\frac{B_2(\{u\})}{u^{s+2}}du\end{align*} and the  exponential sum methods. We obtain   bounds of the form
\[|\zeta(1+it)|\leq v\log t\hspace{1cm}\text{when}\;t\geq t_0\]for various $v$ and $t_0$, with $v>\frac{1}{2}$ and $t_0$ sufficiently large.  When $t_0=10^6$, the best we can obtain  is
\[|\zeta(1+it)|\leq 0.7421\log t\hspace{1cm}\text{when}\;t\geq 10^6.\]
Then numerical computations  show  that the same still holds when $e\leq t\leq 10^6$.  In fact, numerical computations show that when $e\leq t\leq 10^6$, $|\zeta(1+it)|/\log t$ achieves the maximum value $0.6443$ when $t=17.7477$. In principle, we can show that  $|\zeta(1+it)|\leq 0.6443 \log t$ holds for all $t\geq e$ if we can perform numerical computations of $\zeta(1+it)$ for $t\leq 10^{10}$. Due to the limitation of our computer, we prefer to use a different approach.
Using the representation of the Riemann zeta function provided by the Riemann--Siegel formula, we can show that
	\[ |\zeta(1+it)|\leq\frac{1}{2}\log t+0.6633 \hspace{1cm}\text{when}\;t\geq 10^6.\]
	Numerical computations   show that this inequality still holds when $e\leq t\leq 10^6$. In other words, we find that 
	\[ |\zeta(1+it)|\leq\frac{1}{2}\log t+0.6633 \hspace{1cm}\text{when}\;t\geq e.\]
Using this result, together with numerical computations, we obtain 
\begin{align*}
|\zeta(1+it)|&\leq 0.6443 \log t\hspace{1cm}\text{when}\;t\geq e,\\
|\zeta(1+it)|&\leq 0.5480\log t\hspace{1cm}\text{when}\;t\geq  652.3704.
\end{align*}
The following theorem summarizes the main results of this work.
\begin{theorem}\label{maintheorem}
		For $t\geq e$, we have
		\begin{equation}\label{20250612_8} |\zeta(1+it)|\leq\frac{1}{2}\log t+0.6633\end{equation} and
		\begin{equation}\label{20250612_7}|\zeta(1+it)|\leq 0.6443 \log t.\end{equation}
		 The result \eqref{20250612_7} is the best possible for a bound of the form $|\zeta(1+it)|\leq v\log t$ that holds for all $t\geq e$. When $t\geq 100$, the bound \eqref{20250612_8} is better than the bound \eqref{20250612_7}. When $t\geq 652.3704$, we can obtain a better bound of the form  $|\zeta(1+it)|\leq v\log t$, which says that
		\begin{equation}\label{20250613_7}|\zeta(1+it)|\leq 0.5480 \log t\hspace{1cm}\text{when}\;t\geq 652.3704.\end{equation} When $ 652.3704 \leq t\leq 10^6$, the bound  \eqref{20250613_7} is better than the bound  \eqref{20250612_8}. 
	\end{theorem}
	In \cite{Yang_3}, it was shown that
	\[|\zeta(1+it)|\leq 1.7310\frac{\log t}{\log\log t}\hspace{1cm}\text{for all}\;t\geq 3.\]
	When $t\leq 6.05\times 10^{11}$, 
	\[\frac{1}{2}\log t+0.6633\leq  1.7310\frac{\log t}{\log\log t}.\] 
	Thus, for $t\leq 6.05\times 10^{11}$, our results are still  better than that obtained in \cite{Yang_3}.
	
	\vspace{0.5cm}
	\noindent
	{\bf Acknowledgment.} This work is supported by the Xiamen University 
	Malaysia Research Fund  XMUMRF/2021-C8/IMAT/0017.  Part of this work is contained in the M.Sc thesis of the first author. We are grateful to Professor T. Trudgian and the reviewer for their valuable comments.

	
	\section{Classical Representation of the Riemann Zeta Function}\label{sec2}
	In this section, we refine a classical formula  that can be used to compute $\zeta(1+it)$.	
	Given a real number $x$, we let $\lfloor x\rfloor$ denote the floor of $x$, and let
	$\{x\}=x-\lfloor x\rfloor$. Then we have
	\[x-1<\lfloor x\rfloor\leq x\quad  \text{and}\quad 0\leq \{x\}<1.\]
	
	The first and second Bernoulli polynomials are given respectively by
	\[B_1(x)=x-\frac{1}{2},\hspace{1cm} B_2(x)=x^2-x+\frac{1}{6}.\]
	Using the Euler-Maclaurin summation formula (see for example, \cite{Ivic_Book, Montgomery_Book}), one has the following integral representation of the Riemann zeta function $\zeta(s)$ for $\text{Re}\,s>-1$:
	\begin{equation} \label{20241022_3}\begin{split}
			\zeta(s)&=\sum_{n\leq x}\frac{1}{n^{s}}+\frac{x^{1-s}}{s-1} +B_1 (\{x\}   )x^{-s}+\frac{s}{2}B_2(\{x\})x^{-s-1}\\&\quad -\frac{s(s+1)}{2}\int_x^{\infty}\frac{B_2(\{u\})}{u^{s+2}}du.\end{split}
	\end{equation}
	By choosing $x$ large enough, the first line can be used to  give an approximation of $\zeta(1+it)$ with the error given by the integral in the second line. To obtain a smaller error bound, we   replace the polynomial $B_2(x)=x^2-x+\frac{1}{6}$ with the polynomial $h(x)=x^2-x+\frac{1}{8}$. Since $B_2(x)-h(x)=\frac{1}{24}$ is a constant, and
\[\frac{s(s+1)}{2}\int_x^{\infty}\frac{1}{u^{s+2}}du=\frac{s}{2} x^{-s-1},\] we obtain   the following integral representation.
	\begin{theorem} \label{20241022_4}
		Let $s=\sigma+it$ and let $h(x)=  x^2-x+\frac{1}{8}$. If $s\neq 1$, $\sigma>-1$, then for any $x\geq 1$, we have
		\begin{equation} \label{20241022_2}\begin{split}
				\zeta(s)&=\sum_{n\leq x}\frac{1}{n^{s}}+\frac{x^{1-s}}{s-1} +\left(\{x\}-\frac{1}{2} \right)x^{-s}+\frac{s}{2}h(\{x\})x^{-s-1}\\&\quad -\frac{s(s+1)}{2}\int_x^{\infty}\frac{h(\{u\})}{u^{s+2}}du.\end{split}
		\end{equation} 
	\end{theorem}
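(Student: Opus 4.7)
The plan is to first establish the classical intermediate representation
\[
\zeta(s) = \sum_{n\leq x} \frac{1}{n^s} + \frac{x^{1-s}}{s-1} + \left(\{x\}-\tfrac{1}{2}\right) x^{-s} - s\int_x^\infty \frac{\{u\}-1/2}{u^{s+1}}\,du,
\]
valid for $\sigma>0$, $s\neq 1$, and then upgrade this to \eqref{20241022_2} by performing one further integration by parts on the tail integral.

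To obtain the intermediate formula I would work first in the half-plane $\sigma>1$, where $\zeta(s)=\sum_{n=1}^\infty n^{-s}$ converges absolutely. Applying the first-order Euler--Maclaurin summation formula to $f(u)=u^{-s}$ on $[N,M]$ for a positive integer $N$ and letting $M\to\infty$ gives
\[
\sum_{n>N} n^{-s} = \frac{N^{1-s}}{s-1} - \tfrac{1}{2}N^{-s} - s\int_N^\infty \frac{\{u\}-1/2}{u^{s+1}}\,du,
\]
which is the intermediate formula at an integer $x=N$. For general $x\ge 1$ with $N=\lfloor x\rfloor$, I would split $\int_N^\infty = \int_N^x + \int_x^\infty$ and evaluate the finite piece explicitly using $\{u\}=u-N$ on $[N,x]$; elementary rearrangement recovers the intermediate formula with $N$ replaced by $x$. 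Since $\{u\}-1/2$ is bounded, the tail integral converges locally uniformly for $\sigma>0$, so both sides are meromorphic in $\sigma>0$ with a single simple pole at $s=1$, and agreement on $\sigma>1$ extends to all of $\sigma>0$, $s\neq 1$ by analytic continuation.

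For the final step, the polynomial $h(u)=u^2-u+\tfrac{1}{8}$ is chosen so that $h'(u)=2(u-1/2)$; crucially, $h(0)=h(1)=1/8$, so the periodic function $u\mapsto h(\{u\})$ is \emph{continuous} at the integers even though its derivative $2(\{u\}-1/2)$ is not. Integrating by parts in the tail of the intermediate formula with $U=h(\{u\})$ and $dV=u^{-s-2}\,du$, using the uniform bound $|h(\{u\})|\le 1/8$ to annihilate the boundary term at infinity, yields
\[
\int_x^\infty \frac{h(\{u\})}{u^{s+2}}\,du = \frac{h(\{x\})\,x^{-s-1}}{s+1} + \frac{2}{s+1}\int_x^\infty \frac{\{u\}-1/2}{u^{s+1}}\,du.
\]
Multiplying by $s(s+1)/2$ and substituting into the intermediate formula produces precisely \eqref{20241022_2}.

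The main point to watch is not analytic but careful bookkeeping with the jump discontinuities of $\{u\}$ at integer points. This arises twice: when extending the intermediate formula from integer $x$ to arbitrary real $x\ge 1$, and when integrating by parts across integers in the last step. The latter is the reason for the specific constant $\tfrac{1}{8}$ in $h$: any constant making $h(0)=h(1)$ would suffice to make the integration by parts valid without extra contributions at integers, but $\tfrac{1}{8}$ additionally minimizes $\sup_u |h(\{u\})|$, which will be important for the explicit estimates developed later in the paper.
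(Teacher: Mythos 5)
Your proposal is correct and follows essentially the same route as the paper: derive the first-order Euler--Maclaurin (Abel summation) identity giving $\zeta(s)$ with the $\{u\}-\tfrac12$ tail integral, continue analytically to $\sigma>0$, then integrate by parts once more using the shifted antiderivative $h(u)=u^2-u+\tfrac18$ whose key property $h(0)=h(1)$ makes $h(\{u\})$ continuous. The only cosmetic difference is that you pass through the integer case $x=N$ and then extend to real $x$ by splitting the integral, whereas the paper obtains the intermediate formula directly at general $x\geq 1$ by a Riemann--Stieltjes computation of $\int_x^\infty u^{-s}\,d\lfloor u\rfloor$; both give the same intermediate identity and both integration-by-parts directions (yours with $U=h(\{u\})$, theirs with $dV=\tfrac12 dh(\{u\})$) are equivalent.
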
The choice of $h(x)$ over $B_2(x)$ is due to the fact that  
	\[ \max_{0\leq x\leq 1}| B_2(x)|= \frac{1}{6},\]while\[ \max_{0\leq x\leq 1}| h(x)|=\frac{1}{8}.\]
	 
	Using
	formula \eqref{20241022_2}, we obtain the following.
	\begin{theorem}\label{20241022_6}
		For $t>0$ and $N$  a positive integer, let
		\begin{equation}\label{20250529_1}g_N(t)=\sum_{n=1}^{ N}\frac{1}{n^{1+it}}+\frac{N^{-it}}{it} -\frac{1}{2} N^{-1-it}+\frac{1+it}{16}N^{-2-it}.\end{equation}
		Then
		\begin{equation}\label{20250613_2}
		\left|\zeta(1+it)-g_N(t)\right|\leq \frac{(1+t)(2+t)}{32N^2}.
		\end{equation}
	\end{theorem}
	\begin{proof}
		From  \eqref{20241022_2}, we find that when $t>0$ and $N$ is a positive integer, we have
		\begin{align*}
			\left|\zeta(1+it)-g_N(t)\right|&\leq \left|\frac{(1+it)(2+it)}{2}\right|\int_N^{\infty} \frac{|h(\{u\})|}{|u^{3+it}|}du\\
			&\leq 
		\frac{(1+t)(2+t)}{16}\int_N^{\infty}\frac{du}{u^3}\\&=\frac{(1+t)(2+t)}{32N^2}. \qedhere
		\end{align*}
		 
	\end{proof}
	 
	By Theorem \ref{20241022_6},
		for $t>0$, when $N$ is a large enough positive integer, the function
		$g_N(t)$ \eqref{20250529_1}
		can be used to approximate $\zeta(1+it)$.  The error is at most
		\[
		  \frac{(1+t)(2+t)}{32N^2}.
		\]
		To compute $\zeta(1+it)$ for $t$ in the interval $[t_0, T]$, choose a fixed spacing $h$ and generate the points $t_k=t_0+kh$ for $0\leq k\leq K$, where $K=\lfloor(T-t_0)/h\rfloor$.  The spacing $h$  should be small enough so that the  values $|\zeta(1+it_k)|$ computed can sufficiently reflect the behaviour of $|\zeta(1+it)|$ for $t$ in the interval $[t_0, T]$. Usually it is sufficient to take $h$ to be $0.01$. To control the accuracy of approximation, we fixed an error threshold $r$ and let $N$ be the smallest positive integer so that
		\[ \frac{(1+T)(2+T)}{32N^2}\leq r.\]Then   $g_N(t_k)$, $0\leq k\leq K$ are computed  and they gave values of  $|\zeta(1+it_k)|$ with error  at most $r$. The value of $r$ we take depends on the accuracy of  $|\zeta(1+it_k)|$ that we want to achieve. For plotting graphs over a large range of $t$, we take $r=0.005$. 

	The following is the MATLAB code.
\begin{tcolorbox}[colback=white, colframe=black] 
\begin{lstlisting}[style=Matlab-editor]
function zeta(t0,T,h,r)
	
t=t0:h:T; 	
N=ceil(sqrt((1+T)*(2+T)/(32*r)));
g=zeros(size(t));
	
for n=1:N
     g=g+1./n.^(1+1i*t);
end
	
g=g+N.^(-1i*t)./(1i*t)-1/2*N.^(-1-1i*t)+(1+1i*t)/16.*N.^(-2-1i*t); % g_N(t) 
	 
\end{lstlisting}
\end{tcolorbox}

	This algorithm has an advantage for being simple. However, the computations become very intensive when $T$ is large. In this work, we limit our computations to $T\leq 10^6$ only. For $e\leq t\leq 10^6$, we find that
	    $|\zeta(1+it)|/\log t$ achieves its maximum value 0.6443 when $t=17.7477$.

	In the coming section, we want to obtain explicit upper bounds of $|\zeta(1+it)| $ using the methods of exponential sums.  
Applying triangle inequality to \eqref{20241022_2}, together with the simple estimate
		\begin{align*}
			\left|\frac{(1+it)(2+it)}{2}\int_x^{\infty}\frac{ h(\{u\}) }{u^{3+it}}du\right|\leq \frac{(t+1)(t+2)}{32 x^2},
		\end{align*}we obtain immediately the following.
	\begin{theorem}
		For $x\geq 1$ and $t>0$,
		\begin{equation}\label{240114_1}|\zeta(1+it)| \leq \left|\sum_{n\leq x}\frac{1}{n^{1+it}}\right|+\frac{1}{t}+\frac{1}{2x}+\frac{t^2+5t+4}{32x^2}.\end{equation}
	\end{theorem}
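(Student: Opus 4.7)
The plan is to derive the inequality as a direct consequence of Theorem~\ref{20241022_4} by setting $s=1+it$ (so $\sigma=1>0$) and estimating each of the four remainder terms on the right-hand side of \eqref{20241022_2} by the triangle inequality.

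First I would substitute $s=1+it$ into \eqref{20241022_2} to obtain
\[
\zeta(1+it)=\sum_{n\le x}\frac{1}{n^{1+it}}+\frac{x^{-it}}{it}+\Bigl(\{x\}-\tfrac12\Bigr)x^{-1-it}+\frac{1+it}{2}h(\{x\})x^{-2-it}-\frac{(1+it)(2+it)}{2}\int_x^{\infty}\frac{h(\{u\})}{u^{3+it}}\,du.
\]
Taking absolute values term by term and using $|x^{-it}|=|u^{-it}|=1$, I would bound the second, third and fourth terms trivially: $|x^{-it}/(it)|=1/t$, $|\{x\}-\tfrac12|\le \tfrac12$ gives $1/(2x)$, and $|h(\{x\})|\le\tfrac18$ combined with $|1+it|\le 1+t$ gives $(1+t)/(16x^2)$ for the fourth term.

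For the integral term, I would pull absolute values inside, use $|h(\{u\})|\le\tfrac18$ and $|u^{-3-it}|=u^{-3}$, and evaluate $\int_x^{\infty}u^{-3}\,du=1/(2x^2)$; together with $|(1+it)(2+it)|\le(1+t)(2+t)$ this gives the bound $(1+t)(2+t)/(32x^2)$. The only non-routine step is combining the last two bounds into a single $x^{-2}$ term with the stated numerator: summing gives
\[
\frac{1+t}{16x^2}+\frac{(1+t)(2+t)}{32x^2}=\frac{2(1+t)+(1+t)(2+t)}{32x^2}=\frac{(1+t)(4+t)}{32x^2}=\frac{t^2+5t+4}{32x^2},
\]
which matches the target exactly. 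Adding up all four estimates together with the sum $\sum_{n\le x}n^{-1-it}$ then yields \eqref{240114_1}.

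I do not anticipate any real obstacle here; the argument is essentially the same triangle-inequality computation that produced Corollary~\ref{20241022_6}, except that here the initial partial sum $\sum_{n\le x}n^{-1-it}$ is kept intact (so that its true modulus, rather than the trivial bound $\sum n^{-1}$, appears in the estimate) and the third term is retained rather than absorbed. The only thing to double-check is the clean algebraic identity $2(1+t)+(1+t)(2+t)=(1+t)(4+t)$, which is used to package the fourth term together with the integral remainder into the single $(t^2+5t+4)/(32x^2)$ expression.
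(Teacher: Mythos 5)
Your proposal is correct and follows exactly the same route as the paper: apply the triangle inequality to \eqref{20241022_2} with $s=1+it$, bound each remainder term using $|h(\{u\})|\le\tfrac18$ and $|1+it|\le 1+t$, $|(1+it)(2+it)|\le(1+t)(2+t)$, and then combine the two $x^{-2}$ terms via $2(1+t)+(1+t)(2+t)=t^2+5t+4$. No discrepancies.
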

	 
	To find an upper bound of $|\zeta(1+it)|$, it remains to bound the sum in \eqref{240114_1}.
	 A crude estimate  is given by
	\begin{equation}\label{20250529_2}\left|\sum_{n\leq x}\frac{1}{n^{1+it}}\right|\leq\sum_{n\leq x}\frac{1}{n}.\end{equation} The sum $\di  \sum_{n\leq x}\frac{1}{n}$ is classical.
	In \cite{Francis}, it has been shown that for $x\geq 1$, 
		\[\sum_{n\leq x}\frac{1}{n } \leq \log x+\gamma+\frac{1}{2x}-\frac{1}{12x^2}+\frac{1}{64x^4},\]where $\gamma$ is the Euler-Mascheroni constant.
	  However, we can derive a simpler formula as follows.
	\begin{lemma}\label{240112_4}
		For $x\geq 1$,
		\begin{equation}\label{240112_3}
			\sum_{n\leq x}\frac{1}{n } \leq \log x+\gamma+\frac{1}{x}.
		\end{equation}
		
	\end{lemma}
	 
	\begin{proof}
		Using Euler's summation formula, one can obtain the harmonic sum bound
		\[ \sum_{n\leq N}\frac{1}{n}\leq\log N+\gamma+\frac{1}{N}\] when $N$ is an integer (see for example, \cite{Apostol_1}).
		Now for any real number $x\geq 1$, let $N=\lfloor x\rfloor$. It is easy to check that the function $\log x+\frac{1}{x}$ is increasing if $x\geq 1$. By definition, $N\leq x$. Hence,
		\[ \sum_{n\leq x}\frac{1}{n }=\sum_{n\leq N}\frac{1}{n} \leq\log N+\gamma+\frac{1}{N}\leq \log x+\gamma+\frac{1}{x}. \qedhere\]
		 
	\end{proof}
 
	\bigskip
	\section{Bounds of $ |\zeta(1+it)|$ Using Exponential Sums}\label{secexponential}
	
	To obtain an upper bound of $|\zeta(1+it)|$, one of the elementary methods is to use \eqref{240114_1}. One then needs to find a bound of 
	\[ \sum_{n\leq x}\frac{1}{n^{1+it}}.\]

	Let $a$ and $b$ be   real numbers with $a<b$. If $f(n)$ is real for all integers $n$ in the interval $I=(a, b]$, a sum of the form
	\[\sum_{a<n\leq b}e^{2\pi i f(n)} \]  is called an exponential sum.   A good reference for exponential sums is the book \cite{Kolesnik_book}.
	A classical result of Kuzmin and Landau \cite{Kuzmin, Landau_28} says that
	if $f$ is twice continuously differentiable and 
	$\lambda\leq |f''(x)|\leq\alpha\lambda$ for some positive contants $\lambda$ and $\alpha$, then 
	\begin{equation}\label{20241024_2}\sum_{a<n\leq b}e^{2\pi i f(n)}=O\left(\alpha|I|\lambda^{\frac{1}{2}}+\lambda^{-\frac{1}{2}}\right),\end{equation}where $|I|=b-a$.
	 To obtain explicit bounds for $|\zeta(1+it)|$, one needs to compute the implied constants in \eqref{20241024_2}. 
	This has been done by several authors \cite{Platt_Trudgian, Patel2022, Yang_2, Yang_1} in the case where $a$ and $b$ are integers. For our applications, we consider the general case where $a$ and $b$ are any real numbers with $a<b$.  In \cite{Patel2022}, Patel made a remark that one can use an observation in \cite{Platt_Trudgian} to improve his result to the following. If $N$ and $L$ are integers, $f:[N+1,N+L]\to\mb{R}$ is a twice   continuously differentiable function, $V$ and $W$ are positive numbers with $V<W$, and
	\[\frac{1}{W}\leq |f''(x)|\leq\frac{1}{V}\hspace{1cm}\text{for all}\;x\in [N+1, N+L],\]
	then 
	\begin{equation}\label{20250914_1}\left|\sum_{n=N+1}^{N+L}e^{2\pi i f(n)}\right|\leq \frac{4(L-1)}{\sqrt{\pi}}\frac{\sqrt{W}}{V}+\frac{8\sqrt{W}}{\sqrt{\pi}}+\frac{ L-1 }{V}+3.\end{equation}
	
	In the following, we consider arbitrary real numbers $a$ and $b$ with $a<b$. If $\lfloor b \rfloor=\lfloor a\rfloor=0$, there is no integer $n$ satisfying $a<n\leq b$ and so the sum is vacuous. If $\lfloor b \rfloor- \lfloor a\rfloor\geq 1$,  take  
	\[N=\lfloor a \rfloor,\quad L=\lfloor b\rfloor-\lfloor a\rfloor,\quad W=\frac{1}{\lambda},\quad V=\frac{1}{\alpha \lambda}\]in \eqref{20250914_1}. Since
	\[L\leq b-a+1,\]  we obtain the following.
	\begin{theorem}\label{240113_1}
		Let $a$ and $b$ be real numbers such that $a<b$, and let $I=(a,b]$. If $f:I\to\mb{R}$ is a twice   continuously differentiable function, and  there exist $\lambda>0$ and $\alpha\geq 1$ such that
		\begin{equation*}\lambda\leq |f''(x)|\leq \alpha\lambda\hspace{1cm}\text{for all}\;x\in I,\end{equation*}  
		then
		\begin{equation}\label{20250529_3}\left|\sum_{n\in I}e^{2\pi i f(n)}\right|\leq   \frac{4}{\sqrt{\pi}}\alpha|I|\lambda^{\frac{1}{2}}+\frac{8}{\sqrt{\pi}}\lambda^{-\frac{1}{2}}+\alpha|I|\lambda+3,\end{equation}where $|I|=b-a$.

	\end{theorem}

Applying  Theorem \ref{240113_1}, we obtain the following.
	
	\begin{proposition}\label{240113_2}
		For any $t>0$,  and any real numbers $a$ and $b$ such that $0< a<b$,   we have
		\[\left|\sum_{a<n\leq b}\frac{1}{n^{ 1+it}}\right|\leq \frac{4\sqrt{2}(b-a)}{\pi a^2}\sqrt{t} -\frac{2\sqrt{2}}{\pi a }\sqrt{t} \log\frac{b}{a}+\frac{8\sqrt{2}}{\sqrt{t}}\log\frac{b}{a}+\frac{8\sqrt{2}}{\sqrt{t}}+\frac{t}{2\pi a^2}\log\frac{b}{a}+\frac{3}{a}.\]
	\end{proposition}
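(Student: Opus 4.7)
The strategy is to separate the oscillatory factor $n^{-it}$ from the smooth factor $1/n$ by Abel summation, and then to estimate the resulting partial sums of $\sum e^{2\pi i f(n)}$ using Theorem \ref{240113_1} with $f(x)=-(t/(2\pi))\log x$.

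Specifically, I would set $A(u)=\sum_{a<n\leq u} n^{-it}=\sum_{a<n\leq u} e^{2\pi i f(n)}$, so that Abel's formula (with $g(u)=1/u$) gives
\[\sum_{a<n\leq b}\frac{1}{n^{1+it}} = \frac{A(b)}{b}+\int_a^b \frac{A(u)}{u^2}\,du.\]
It then suffices to bound $|A(u)|$ by a function of $u$ that can be integrated against $du/u^2$. Since $f''(x)=t/(2\pi x^2)$ is positive and decreasing, on the interval $(a,u]$ the hypotheses of Theorem \ref{240113_1} are satisfied with $\lambda = t/(2\pi u^2)$, $\alpha = u^2/a^2$, and $|I|=u-a$. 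Substituting these values into the conclusion of that theorem and simplifying gives an estimate of the shape
\[|A(u)| \leq \frac{2\sqrt{2}\,u(u-a)\sqrt{t}}{\pi a^2}+\frac{8\sqrt{2}\,u}{\sqrt{t}}+\frac{(u-a)\,t}{2\pi a^2}+3.\]

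The final step is essentially mechanical: plug this bound into $|A(b)|/b + \int_a^b |A(u)|/u^2\,du$ and evaluate the four elementary integrals that arise, namely $\int_a^b (u-a)/u\,du$, $\int_a^b (u-a)/u^2\,du$, $\int_a^b du/u$, and $\int_a^b du/u^2$. The main thing to watch is the bookkeeping: two terms of the form $(b-a)t/(a^2 b)$ produced respectively by the boundary contribution and by the integral of the piece $(u-a)t/(2\pi a^2 u^2)$ appear with opposite signs and cancel; similarly the $3/b$ in the boundary term cancels against the $-3/b$ produced by $\int_a^b 3/u^2\,du$, leaving exactly the $3/a$ of the proposition. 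After these cancellations the six stated terms, with coefficients $4\sqrt{2}/\pi$, $-2\sqrt{2}/\pi$, $8\sqrt{2}$, $8\sqrt{2}$, $1/(2\pi)$ and $3$, emerge directly. Nothing deeper than Theorem \ref{240113_1} and partial summation is needed; the only real obstacle is making sure the arithmetic of collecting like terms is executed cleanly.
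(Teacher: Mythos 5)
Your proposal is correct and follows essentially the same path as the paper: set $S(u)=\sum_{a<n\le u}n^{-it}$, apply Theorem \ref{240113_1} with $f(x)=-\tfrac{t}{2\pi}\log x$, $\lambda=t/(2\pi u^2)$, $\alpha=u^2/a^2$ to bound $|S(u)|$, and then use Abel/Riemann--Stieltjes summation $\sum_{a<n\le b}n^{-1-it}=S(b)/b+\int_a^b S(u)u^{-2}\,du$. The cancellations you describe (the two $t(b-a)/(2\pi a^2 b)$ terms, and the $3/b$ against the $-3/b$ from $\int_a^b 3u^{-2}\,du$, leaving $3/a$) are exactly what occur in the paper's computation.
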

	\begin{proof}
		
		For $u\in [a, b]$, let
		\[S(u)=\sum_{a<n\leq u}\frac{1}{n^{  it}}.\]
		Then $S(a)=0$. For fixed $t>0$, we take
		\[f(x)=-\frac{t}{2\pi}\log x,\quad x>0.\]
		Then
		\[\frac{t}{2\pi u^2}\leq f''(x)\leq \frac{t}{2\pi a^2}\hspace{1cm}\text{for}\quad a\leq x\leq u.\]
		Applying Theorem \ref{240113_1} with 
		\[\lambda=\frac{t}{2\pi u^2}, \hspace{1cm} \alpha=\frac{u^2}{a^2},\]we find that
		\begin{align*}\left|S(u)\right|&\leq \frac{2\sqrt{2}}{\pi}\frac{u(u-a)}{a^2}\sqrt{t}+\frac{8\sqrt{2}u}{\sqrt{t}}+\frac{t(u-a)}{2\pi a^2}+3.
		\end{align*}
		Using Riemann-Stieltjes integration, we have
		\begin{align*}
			\sum_{a<n\leq b}\frac{1}{n^{ 1+it}}&=\int_a^b \frac{1}{u}dS(u)=\frac{S(b)}{b}+\int_a^b \frac{S(u)}{u^2}du.
		\end{align*}
		Now,
		\begin{align*}
			\left|\frac{S(b)}{b}\right|&\leq \frac{|S(b)|}{b}\leq \frac{2\sqrt{2}(b-a)}{\pi a^2}\sqrt{t} +\frac{8\sqrt{2}}{\sqrt{t}}+\frac{t(b-a)}{2\pi a^2b}+\frac{3}{b}.
		\end{align*}
		On the other hand,
		\begin{align*}
			&\left|\int_a^b \frac{S(u)}{u^2}du\right| \leq \int_a^b \frac{|S(u)|}{u^2}du\\&\leq\frac{2\sqrt{2}}{\pi a^2}\sqrt{t} \int_a^b \frac{u-a}{u}du+\frac{8\sqrt{2}}{\sqrt{t}}\int_a^b \frac{1}{u}du+\frac{t}{2\pi a^2}\int_a^b\frac{u-a}{u^2}du+3\int_a^b\frac{1}{u^2}du\\
			&=\frac{2\sqrt{2}(b-a)}{\pi a^2}\sqrt{t} -\frac{2\sqrt{2}}{\pi a }\sqrt{t} \log\frac{b}{a}+\frac{8\sqrt{2}}{\sqrt{t}}\log\frac{b}{a}+\frac{t}{2\pi a^2}\log\frac{b}{a}+\left(3-\frac{t}{2\pi a}\right)\frac{(b-a)}{ab}.
		\end{align*}
		It follows that
		\begin{align*}
			\left|\sum_{a<n\leq b}\frac{1}{n^{ 1+it}}\right|&\leq\left|\frac{S(b)}{b}\right|+\left|\int_a^b \frac{S(u)}{u^2}du\right|\\
			&\leq \frac{4\sqrt{2}(b-a)}{\pi a^2}\sqrt{t} -\frac{2\sqrt{2}}{\pi a }\sqrt{t} \log\frac{b}{a}+\frac{8\sqrt{2}}{\sqrt{t}}\log\frac{b}{a}+\frac{8\sqrt{2}}{\sqrt{t}}+\frac{t}{2\pi a^2}\log\frac{b}{a}+\frac{3}{a}.
		\end{align*}
	\end{proof}

	Now we can use    \eqref{240114_1} and Proposition \ref{240113_2} to give a better  estimate of $|\zeta(1+it)|$.
	Given $x\geq 1$, let $x_0$ be such that $1\leq x_0\leq x$, and let $k$ be the positive integer such that
	\[\frac{x}{2^k}\leq x_0<\frac{x}{2^{k-1}}.\]
	This implies that
	\begin{equation}\label{20241026_2}\frac{2^k}{x}<\frac{2}{x_0},\hspace{1cm}
		\text{and}\hspace{1cm}
		k< \frac{\log x-\log x_0+\log 2}{\log 2}.\end{equation}
	Split the set of  integers $n$ with $  n\leq x$ into $k+1$ sets $S_0$, $S_1$, $S_2, \ldots, S_k$, where
	\[S_0=\left\{n\,|\, n\leq x_0\right\},\]and 
	for $1\leq j\leq k$, \[ S_j=\left\{n\,\left|\,\frac{x}{2^j}< n\leq \frac{x}{2^{j-1}}\right.\right\}.\]
	For the sum over $S_0$, the crude estimate \eqref{20250529_2} and   Lemma \ref{240112_4} give
	\[\left|\sum_{n\leq x_0}\frac{1}{n^{1+it}}\right|\leq\sum_{n\leq x_0}\frac{1}{n}\leq  
	\log x_0+\gamma+\frac{1}{x_0}.\]
	For the sum over $n$ in $S_j$, $1\leq j\leq k$,
	Proposition \ref{240113_2} gives
	\begin{align*}
		\left|\sum_{\frac{x}{2^j}<n\leq \frac{x}{2^{j-1}}}\frac{1}{n^{1+it}}\right|
		&\leq\frac{2^{j+1}\sqrt{2} }{\pi x}\sqrt{t} \left(2-\log 2\right)+\frac{8\sqrt{2}}{\sqrt{t}}\left(1+\log 2\right) +\frac{2^{2j-1}t}{ \pi x^2}\log 2+\frac{3\times 2^j}{x}.
	\end{align*}
	Therefore,
	\begin{align*}
		\left|\sum_{ x_0<n\leq x}\frac{1}{n^{1+it}}\right|&\leq \sum_{j=1}^k\left\{\frac{2^{j+1}\sqrt{2} }{\pi x}\sqrt{t} \left(2-\log 2\right)+\frac{8\sqrt{2}}{\sqrt{t}}\left(1+\log 2\right)+\frac{2^{2j-1}t}{ \pi x^2}\log 2+\frac{3\times 2^j}{x}\right\}\\
		&=\frac{4(2^{k}-1)\sqrt{2} }{\pi x}\sqrt{t} \left(2-\log 2\right)+\frac{8\sqrt{2}k}{\sqrt{t}}\left(1+\log 2\right) \\&\quad+\frac{ t}{ \pi x^2}\times\frac{2(4^k-1)}{3}\log 2+\frac{6\times (2^k-1)  }{x}.\end{align*}
	Using \eqref{20241026_2}, we have 
	\begin{align*}
		\left|\sum_{ x_0<n\leq x}\frac{1}{n^{1+it}}\right|&\leq    \frac{8\sqrt{2}}{\pi x_0}\sqrt{t} \left(2-\log 2\right) +\frac{8t}{3\pi x_0^2}\log 2 +\frac{12}{x_0}-\frac{6}{x}\\
		&\quad +\frac{8\sqrt{2}\left(1+\log 2\right)  }{\sqrt{t}\log 2} (\log x-\log x_0+\log 2).
	\end{align*}
	It follows from \eqref{240114_1} that
	\begin{equation}\label{240115_1}
		\begin{split}
			|\zeta(1+it)|&\leq \left|\sum_{  n\leq x_0}\frac{1}{n^{1+it}}\right|+\left|\sum_{ x_0<n\leq x}\frac{1}{n^{1+it}}\right|+\frac{1}{t}+\frac{1}{2x}+\frac{t^2+5t+4}{32x^2} \\
			&\leq\log x_0+\gamma+ \frac{8\sqrt{2}}{\pi x_0}\sqrt{t} \left(2-\log 2\right)+\frac{13}{x_0}+\frac{8t}{3\pi x_0^2}\log 2\\
			&\quad +\frac{8\sqrt{2}\left(1+\log 2\right)  }{\sqrt{t}\log 2}(\log x-\log x_0+\log 2)+\frac{1}{t} +\frac{t^2+5t+4}{32x^2}.
	\end{split}\end{equation}
	For any fixed $t$, we can use elementary calculus to find $x_0$ and $x$ that minimize the expression on the right. More precisely,  
	let \[e_0=\frac{8\sqrt{2}\left(1+\log 2\right)  }{ \log 2},\quad e_1=\frac{8\sqrt{2}}{\pi  }  \left(2-\log 2\right), \quad e_2=\frac{8 }{3\pi  }\log 2, \]
	\begin{equation}\label{20250914_3}E_0(t)=1-\frac{e_0}{\sqrt{t}},\quad E_1(t)=e_1\sqrt{t} +13, \quad E_2(t)=e_2t,\end{equation}
	\begin{equation}\label{20250914_4}G_0(t)=\frac{e_0}{\sqrt{t}},\quad G_2(t)=\frac{t^2+5t+4}{ 32},\end{equation}
	\begin{equation}\label{20250612_5}Q_0(t)=\gamma+\frac{e_0\log 2}{\sqrt{t}}+\frac{1}{t}.\end{equation}
	Then \eqref{240115_1} says that
	\begin{equation}\label{20250914_6}|\zeta(1+it)|\leq  E_0(t)\log x_0+\frac{E_1(t)}{x_0}+\frac{E_2(t)}{x_0^2}+G_0(t)\log x+\frac{G_2(t)}{x^2}+Q_0(t)\end{equation}for any $x_0$ and $x$ satisying $1\leq x_0\leq x$. 
	Notice that $  E_1(t), E_2(t), G_0(t), G_2(t), Q_0(t)$ are positive   for  any $t>0$, while $E_0(t)>0$ if and only if
	\[t>e_0^2=763.75.\] For fixed $t>0$, consider the functions
	\begin{subequations}
	\begin{equation} h_E(y) =E_0(t) \log y+\frac{E_1(t)}{y}+\frac{E_2(t)}{y^2},\hspace{1cm} y\geq 1,\label{20250612_2}\end{equation}\begin{equation}h_G(y) =G_0(t)\log y+\frac{G_2(t)}{y^2},\hspace{1cm} y\geq 1. \end{equation}\end{subequations}Note that the functions $h_E(y)$ and $h_G(y)$ are functions depending on $t$. For fixed $t$,
	the derivative of $h_E(y)$ with respect to $y$ is
	\[h_E'(y)=\frac{E_0(t)}{y}-\frac{E_1(t)}{y^2}-\frac{2E_2(t)}{y^3}=\frac{E_0(t)y^2-E_1(t)y-2E_2(t)}{y^3}.\] 
Its sign is determined by the sign of the quadratic function 
\begin{equation}\label{20250914_2}Q_E(y)= E_0(t)y^2-E_1(t)y-2E_2(t).\end{equation}
When $t\leq 763.75$, $E_0(t)\leq 0$ and so $h_E'(y)\leq 0$ for all $y>0$. This implies that  when $t\leq 763.75$, $h_E(y)$ is  a strictly decreasing function of $y$. When $t>763.75$, $E_0(t)>0$. The discriminant of the quadratic function $Q(y)$ \eqref{20250914_2}  is\[\Delta_E(t)=E_1(t)^2+8E_0(t)E_2(t),\]which is positive. Therefore, when $t>763.75$, $Q_E(y)$ has two distinct real roots $y_1$ and $y_2$. Assume that $y_1<y_2$. Then we must have $y_1<0<y_2$.   For $0<y<y_2$, $Q_E(y)<0$. For $y>y_2$, $Q_E(y)>0$. Hence, the function $h_E(y)$ has minimum value at the point
	\begin{equation}\label{20250611_5}y=y_E(t)=y_2=\frac{E_1(t)+\sqrt{E_1^2(t)+8E_0(t)E_2(t)}}{2E_0(t)}.\end{equation}For the function $h_G(y)$, its derivative with respect to $y$ is
	\[h_G'(y)=\frac{G_0(t)}{y}-\frac{2G_2(t)}{y^3}=\frac{G_0(t)y^2-2G_2(t)}{y^3}.\]Its sign is determined by the quadratic function $Q_G(y)=G_0(t)y^2-2G_2(t)$. Since $G_0(t)$ is positive for all 
	  $t>0$, a similar argument shows that  the function $h_G(y)$ has minimum value at  the point
	\begin{equation}\label{20250611_6}y=y_G(t)=\sqrt{\frac{2G_2(t)}{G_0(t)}}.\end{equation}For fixed $t$, the expression \eqref{20250914_6} is minimized when we take $x_0=y_E(t)$ and $x=y_G(t)$. Since we must have $x_0\leq x$, we want to give some simple bounds to $y_E(t)$ and $y_G(t)$ to determine a value $t_0$ so that $y_E(t)\leq y_G(t)$ when $t\geq t_0$. 

	When $t>763.75$, we obtain from \eqref{20250611_5} and \eqref{20250914_3} that
	\[y_E(t)\leq \frac{\di \left(e_1+\frac{13}{\sqrt{t}} +\sqrt{ e_1^2+8e_2+\frac{26e_1}{\sqrt{t}}+\frac{169}{t}}\right) \sqrt{t}}{\di 2\left(1-\frac{e_0}{\sqrt{t}}\right)}.\]
	 On the other hand,  \eqref{20250611_6} and \eqref{20250914_4} give
	 \[y_G(t)\geq \frac{1}{\sqrt{16e_0}}t^{\frac{5}{4}}.\]
When $t>763.75$, the function
\[ \xi_E(t)=\frac{\di \left(e_1+\frac{13}{\sqrt{t}} +\sqrt{ e_1^2+8e_2+\frac{26e_1}{\sqrt{t}}+\frac{169}{t}}\right) }{\di 2\left(1-\frac{e_0}{\sqrt{t}}\right)}\] is decreasing in $t$, while the function
\[\xi_{G}(t)=\frac{1}{\sqrt{16e_0}}t^{\frac{3}{4}} \]is increasing in $t$. Since  $\xi_E(2000)\leq\xi_G(2000)$, we find that when $t\geq 2000$, $\xi_E(t)\leq\xi_G(t)$.
Hence,
	when $t\geq 2000$, we find that \[y_E(t)\leq \frac{\di \left(e_1+\frac{13}{\sqrt{t}} +\sqrt{ e_1^2+8e_2+\frac{26e_1}{\sqrt{t}}+\frac{169}{t}}\right) \sqrt{t}}{\di 2\left(1-\frac{e_0}{\sqrt{t}}\right)}\leq  \frac{1}{\sqrt{16e_0}}t^{\frac{5}{4}}\leq  y_G(t).\] Therefore, when $t\geq 2000$, we have
	 \[|\zeta(1+it)|\leq h_E(y_E(t))+h_G(y_G(t))+Q_0(t).\]
	When $t$ is large, 
	\begin{equation}\label{20250611_1}y_E(t)\sim \lambda_1\sqrt{t}, \hspace{1cm} y_G(t)=O\left( t^{\frac{5}{4}}\right),\end{equation}
	where
	\begin{equation}\label{20250612_3}\lambda_1=\frac{e_1+\sqrt{e_1^2+8e_2}}{2}=4.9443.\end{equation}
	Hence,
	\begin{equation}\label{20250612_4}h_E(y_E(t))\sim \frac{1}{2}\log t+\lambda_2, \quad h_G(y_G(t))=O\left(t^{-\frac{1}{2}}\log t\right), \end{equation}
	where 
	\[\lambda_2=\log\lambda_1+\frac{e_1}{\lambda_1}+\frac{e_2}{\lambda_1^2}=2.5742.\]Therefore, when $t$ is large,
	\[  h_E(y_E(t))+h_G(y_G(t))+Q_0(t)\sim \frac{1}{2}\log t+\lambda_2+\gamma=  \frac{1}{2}\log t+3.1514.\]
	These show that using exponential sums with second order derivatives only, we cannot achieve a bound for $|\zeta(1+it)|$ that is better than $ \frac{1}{2}\log t$. 
	
	Now we turn to   bounds of the form 
	\begin{equation}\label{20250611_3}|\zeta(1+it)|\leq v\log t\hspace{1cm}\text{when}\;t\geq t_0.\end{equation}
To achieve this, we take
	\begin{equation} \label{20241123_1} x=   t^u, \quad x_0= \beta t^v, \end{equation} where   $\beta$, $u$, $v$ are positive constants satisfying
	\[v\leq u,\hspace{1cm} \beta\leq 1.\]
	These conditions ensure that $x_0\leq x$. 
	Substituting into \eqref{240115_1},   we find that
	\begin{equation}\label{20241026_3}|\zeta(1+it)|\leq h_E(\beta t^v)+h_G(t^u)+Q_0(t) =v\log t+A+\omega(t),\end{equation}
	where 
	\begin{align*}
		A&=\log\beta +\gamma,\\
		\omega(t)&= \frac{8\sqrt{2}}{\pi \beta}t^{\frac{1}{2}-v} \left(2-\log 2\right) +\frac{13}{\beta t^v} +\frac{8t^{1-2v}}{3\pi \beta^2}\log 2+\frac{1}{t} +\frac{t^2+5t+4}{32 t^{2u}} \\
		&\quad +\frac{8\sqrt{2}\left(1+\log 2\right)  }{ \log 2} t^{-\frac{1}{2}}\bigl[(u-v)\log t -\log\beta+\log 2\bigr].
	\end{align*}

	To make $v\log t$ the leading term in \eqref{20241026_3}, we must have 
	\[u\geq 1, \quad v\geq\frac{1}{2}.\]
	For fixed $u$ and $v$ with $\frac{1}{2}\leq v\leq u$, the function $\omega(t)$ is   decreasing on $[e^2, \infty)$. If we can find $\beta\in (0,1]$ and $t_0\geq e^2$ so that
	\[A+\omega(t_0) = 0,\]then 
	\[A+\omega(t)  \leq 0\hspace{1cm}\text{for all}\;t\geq t_0.\] This will imply that
	\[|\zeta(1+it)|\leq v\log t\hspace{1cm}\text{when}\;t\geq t_0.\]
	
	When $v=\frac{1}{2}$ and $u>1$,
	\[\lim_{t\to\infty}(A+\omega(t))= \gamma+\log\beta+\frac{8\sqrt{2}}{\pi \beta} \left(2-\log 2\right)  +\frac{8 }{3\pi \beta^2}\log 2=\gamma+\log\beta+\frac{e_1}{\beta}+\frac{e_2}{\beta^2}.\]
	The function 
	\[h_C(\beta)=\gamma+\log\beta+\frac{e_1}{\beta}+\frac{e_2}{\beta^2}, \quad \beta>0\]
	has a minimum at 
	\begin{equation}\label{20250914_9}\beta=\frac{e_1+\sqrt{e_1^2+8e_2}}{2}=\lambda_1=4.9443,\end{equation}with minimum value \[h_C(4.9443)=\gamma+\log\lambda_1+\frac{e_1}{\lambda_1}+\frac{e_2}{\lambda_1^2}= \gamma+2.5742=3.1514.\]
	Therefore, using this method, we cannot achieve a result better than
	\[|\zeta(1+it)|\leq \frac{1}{2}\log t+3.1514,\]agreeing with our earlier asymptotic analysis.

Now consider the case where $  v>\frac{1}{2}$. When $u$ is also greater than 1, we have 
\[\lim_{t\to\infty}\omega(t)=0.\]
Hence, if $\beta<e^{-\gamma}=0.5615$, there   exists $t_0>e^2$ such that
\[A+\omega(t_0)=0.\]
Instead of fixing $v>\frac{1}{2}$ and looking for  $t_0$, we can fix $t_0$ and look for $u$, $v$ and $\beta$ that can achieve 
  \eqref{20250611_3}. As discussed above,  we want to have $A+\omega(t_0)=0$. By \eqref {20241026_3}, this is achieved if
  \begin{equation}\label{20250614_2} h_E(\beta t_0^v)+h_G(t_0^u)+Q_0(t_0) =v\log t_0.\end{equation}
  To make $v$ the smallest possible for the given $t_0$, we need to choose $\beta$, $u$ and $v$ that will minimize the left hand side. From our earlier analysis, when $t_0\geq 2000$, the minimum of the left hand side is achieved when $\beta t_0^v=y_E(t_0)$ and $t_0^u=y_G(t_0)$, with $y_E(t_0)\leq y_G(t_0)$. Hence,  for given $t_0\geq 2000$, 
  we first compute $y_E(t_0)$ and $y_G(t_0)$ from \eqref{20250611_5} and \eqref{20250611_6} respectively.   By \eqref{20250614_2}, we should let $v$ be the positive number such that
\begin{equation}\label{20250611_8}v=\frac{h_E(y_E(t_0))+h_G(y_G(t_0))+Q_0(t_0)}{\log t_0}.\end{equation}
Then   $u$ and $\beta$ should be defined so that
\begin{equation}\label{20250611_9}\beta t_0^v=y_E(t_0)\hspace{1cm}\text{and}\hspace{1cm} t_0^u=y_G(t_0).\end{equation}
For $v$, $\beta$ and $u$ defined in this way, we must have  
$
A+\omega(t_0) =0$. 
From what we have discussed above, this implies that 
\begin{equation}\label{20250614_7}|\zeta(1+it)|\leq v\log t\hspace{1cm}\text{when}\;t\geq t_0.\end{equation}
For a fixed $t_0\geq 2000$, the $v$ obtained in this way is the minimum possible $v$   such that  \eqref{20250614_7} holds.

\begin{table}[ht]\caption{The values of  $\beta$, $v$ and $u$ satisying \eqref{20250611_8} and \eqref{20250611_9} for given $t_0$.\label{tab1}}
\begin{tabular}{|c|c|c|c|}
\hline
 	$t_0$	&	$\beta$ &	$v$ &	$u$	\\\hline
$	10^{5}	$&$	0.1474	$&$	0.8134	$&$	0.9854	$\\\hline
$	10^{6}	$&$	0.1796	$&$	0.7421	$&$	1.0295	$\\\hline
$	10^{7}	$&$	0.1978	$&$	0.7003	$&$	1.0610	$\\\hline
$	10^{8}	$&$	0.2061	$&$	0.6726	$&$	1.0847	$\\\hline
$	10^{9}	$&$	0.2095	$&$	0.6526	$&$	1.1030	$\\\hline
$	10^{10}	$&$	0.2108	$&$	0.6370	$&$	1.1177	$\\\hline
$	10^{11}	$&$	0.2113	$&$	0.6245	$&$	1.1297	$\\\hline
$	10^{12}	$&$	0.2115	$&$	0.6141	$&$	1.1398	$\\\hline
$	10^{13}	$&$	0.2115	$&$	0.6053	$&$	1.1482	$\\\hline
$	10^{14}	$&$	0.2116	$&$	0.5978	$&$	1.1555	$\\\hline
$	10^{15}	$&$	0.2116	$&$	0.5912	$&$	1.1618	$\\\hline
$	10^{20}	$&$	0.2116	$&$	0.5684	$&$	1.1839	$\\\hline
$	10^{30}	$&$	0.2116	$&$	0.5456	$&$	1.2059	$\\\hline
$	10^{40}	$&$	0.2116	$&$	0.5342	$&$	1.2169	$\\\hline
$	10^{50}	$&$	0.2116	$&$	0.5274	$&$	1.2235	$\\\hline
$	10^{60}	$&$	0.2116	$&$	0.5228	$&$	1.2280	$\\\hline
$	10^{70}	$&$	0.2116	$&$	0.5196	$&$	1.2311	$\\\hline
$	10^{80}	$&$	0.2116	$&$	0.5171	$&$	1.2335	$\\\hline
$	10^{90}	$&$	0.2116	$&$	0.5152	$&$	1.2353	$\\\hline
$	10^{100}	$&$	0.2116	$&$	0.5137	$&$	1.2368	$\\\hline
$	10^{200}	$&$	0.2116	$&$	0.5068	$&$	1.2434	$\\\hline
$\hspace{0.5cm}	10^{300}	\hspace{0.5cm}$&$	\hspace{0.5cm}0.2116\hspace{0.5cm}	$&$	\hspace{0.5cm}0.5046\hspace{0.5cm}	$&$	\hspace{0.5cm}1.2456\hspace{0.5cm}	$\\\hline

\end{tabular}

\end{table}

In Table \ref{tab1}, we list down some values of $\beta$, $v$ and $u$ computed using \eqref{20250611_8} and \eqref{20250611_9} with given $t_0$. We can see the trend that as $t_0$ gets large,   $v$ approaches the limit $\frac{1}{2}=0.5$, while $u$ approaches the limit $\frac{5}{4}=1.25$. As for $\beta$, the equations \eqref{20250611_9} and \eqref{20250611_8} say  that
\begin{equation}\label{20250612_6}-\log\beta= \left(h_E(y_E(t_0))-\log y_E(t_0)\right)+h_G(y_G(t_0))+Q_0(t_0).\end{equation}
By  \eqref{20250612_2} and \eqref{20250611_1}, we find that when $t$  is large,
\[h_E(y_E(t_0))-\log y_E(t_0) \sim \frac{E_1(t_0)}{y_E(t_0)}+\frac{E_2(t_0)}{y_E(t_0)^2} \sim \frac{e_1}{\lambda_1}+\frac{e_2}{\lambda_1^2},\]
where $\lambda_1$ is given by \eqref{20250612_3}. From \eqref{20250612_4}, we  have $h_G(y_G(t_0))\to 0$ when $t_0\to\infty$. From the definition of $Q_0(t)$ \eqref{20250612_5}, $Q_0(t_0)\to \gamma$ when $t_0\to\infty$. Hence,   when $t_0$ is large, \eqref{20250612_6} shows that $\beta$ should approach the limiting value
\[\exp\left( -\frac{e_1}{\lambda_1}-\frac{e_2}{\lambda_1^2}-\gamma\right)=0.2116.\]This is indeed the case as shown in Table \ref{tab1}.
	
Once we have shown that the inequality $|\zeta(1+it)|\leq v\log t$ holds for $t\geq t_0$, we can extend this to $t\geq t_1$ for some $0<t_1<t_0$ by   numerically computing $\zeta(1+it)$  for $t_1\leq t<t_0$. For example, we have shown that
\[|\zeta(1+it)|\leq 0.7421\log t \hspace{1cm}\text{when}\;t\geq 10^6.\]
When $e\leq t<10^6$, numerical calculations of $|\zeta(1+it)|$ using the code in Section \ref{sec2} show that we still have
	\[|\zeta(1+it)|\leq 0.7421\log t.\]
	Hence, we conclude that \[|\zeta(1+it)|\leq 0.7421\log t \hspace{1cm}\text{when}\;t\geq e.\]
	This  implies the result  in  \cite{Trudgian_2014} which says that
	\[|\zeta(1+it)|\leq 0.75\log t\hspace{1cm}\text{when}\;t\geq e.\]
	From Table \ref{tab1}, we find that 
	\[|\zeta(1+it)\leq 0.6370\log t\hspace{1cm}\text{when}\;t\geq 10^{10}.\]  If we can numerically compute $\zeta(1+it)$ for $t$ up to $10^{10}$, we can then verify that
	\[|\zeta(1+it)|\leq 0.6443\log t \hspace{1cm}\text{when}\;t\geq e.\]However, ordinary computers cannot handle  computations based on the simple algorithm given in Section \ref{sec2} up to this value of $t$. To go around this, we use the Riemann--Siegel formula.

	\bigskip
	\section{Bounds of $|\zeta(1+it)|$ Using the Riemann--Siegel Formula}\label{sec4}
	When $t$  is large, Theorem \ref{20241022_6} does not give an efficient way  to compute $\zeta(1+it)$. For  a more efficient way, we can use the Riemann--Siegel formula.  
	
	 In 1932, Siegel presented in his paper \cite{Siegel1932}  an unpublished result of Riemann and gave derivations to the formula that is now known as
	the  Riemann--Siegel formula.  In this section, we are mainly concerned with using  the Riemann--Siegel formula  to obtain a better bound for $|\zeta(1+it)|$. For efficient computations of $\zeta(s)$ using the Riemann--Siegel formula, one  can refer to the work \cite{AriasdeReyna_2011}.
	
	The starting point of the Riemann--Siegel formula is the following representation of the Riemann zeta function given by Riemann.
	\begin{theorem} [Riemann]
		For $s\in \mathbb{C}$,
		\begin{equation}\label{20241026_7}\zeta(s)=\mathcal{R}(s)+\chi(s)\overline{\mathcal{R}}(1-s),\end{equation}where
		\[\mathcal{R}(s)=\int_{0\swarrow 1} \frac{w^{-s}e^{i\pi w^2}}{e^{i\pi w}-e^{-i\pi w}}dw,\hspace{1cm}\overline{\mathcal{R}}(s)=\overline{\mathcal{R}(\overline{s})},\]
		and
		\begin{equation}\label{231217_2}\chi(s)=\pi^{s-\frac{1}{2}}\frac{\Gamma\left(\frac{1-s}{2}\right)}{\Gamma\left(\frac{s}{2}\right)}.\end{equation}
		The integration contour $0\swarrow 1$ in the definition of $\mathcal{R}(s)$ is the line $a+te^{-\frac{3\pi i}{4}}$, $t\in\mathbb{R}$ that passes through a point $a$ between 0 and 1, with southwest direction as indicated by the arrow.
	\end{theorem}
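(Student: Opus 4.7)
The plan is to prove Riemann's identity by contour deformation combined with Cauchy's residue theorem, with the key algebraic reduction coming from the Gamma-function reflection formula. The integrand of $\mathcal{R}(s)$ can be rewritten as
\[\frac{w^{-s}e^{i\pi w^2}}{2i\sin(\pi w)},\]
whose only singularities are simple poles at the integers $w=n$. Using $e^{i\pi n^2}=(-1)^{n^2}=(-1)^n$ to cancel the sign coming from $\cos(\pi n)=(-1)^n$ in the derivative of the denominator, the residue at $w=n$ is precisely $n^{-s}/(2\pi i)$.

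First I would establish the identity in the half-plane $\text{Re}\,s>1$, where $\zeta(s)=\sum_{n\geq 1}n^{-s}$ converges absolutely. Consider an auxiliary contour $C_N$ parallel to $0\swarrow 1$ but passing between the integers $N$ and $N+1$ for a large integer $N$. Along the southwest direction, the factor $e^{i\pi w^2}$ decays like $e^{-2\pi\beta^2}$, where $\beta$ parameterises the distance along the tilt; this overwhelms the at-most-exponential growth of $1/\sin(\pi w)$, so the integrand vanishes at both ends of each tilted line and the arcs connecting $0\swarrow 1$ to $C_N$ at $\pm\infty$ contribute nothing. Cauchy's residue theorem applied to the resulting closed rectangle then gives
\[\mathcal{R}(s)-I_N(s)=\sum_{n=1}^{N}n^{-s},\]
where $I_N(s)$ denotes the integral along $C_N$. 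Letting $N\to\infty$, the sum converges to $\zeta(s)$.

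The substantive step is to identify $\lim_{N\to\infty}I_N(s)$ with $-\chi(s)\overline{\mathcal{R}}(1-s)$. The natural device is the substitution $w\mapsto -w$ (or $w\mapsto 1-w$), which maps the far-right SW contour $C_N$ to a contour in the left half-plane and converts $e^{i\pi w^2}/\sin(\pi w)$ into a form whose complex conjugate is precisely the kernel of $\overline{\mathcal{R}}(1-s)$. The factor $w^{-s}$ picks up a phase $e^{\pm i\pi s}$ from the substitution, and combining this phase with the Gamma reflection identity $\Gamma(u)\Gamma(1-u)=\pi/\sin(\pi u)$, applied at $u=s/2$ and $u=(1-s)/2$, assembles the prefactor $\chi(s)=\pi^{s-1/2}\Gamma((1-s)/2)/\Gamma(s/2)$ in \eqref{231217_2}.

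Finally, both sides of \eqref{20241026_7} are meromorphic in $s$: $\mathcal{R}(s)$ is entire because the contour avoids every integer pole, while $\chi(s)\overline{\mathcal{R}}(1-s)$ has a simple pole at $s=1$ coming from $\Gamma((1-s)/2)$, matching the known pole of $\zeta(s)$. Once the identity holds on the open set $\text{Re}\,s>1$, it extends to all of $\mathbb{C}$ by analytic continuation. The main obstacle I anticipate is the algebraic-geometric identification in the third paragraph: rigorously carrying out the change of variable on the tilted contour and extracting the exact Gamma-function prefactor demands careful Hankel-type contour analysis together with the reflection formula, and this technical calculation—Siegel's reconstruction of Riemann's unpublished argument—constitutes the substantial content of the theorem.
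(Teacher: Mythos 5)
The paper does not actually prove this theorem: it is stated as a classical result of Riemann (reconstructed by Siegel \cite{Siegel1932} and presented, e.g., in \cite{AriasdeReyna_2011}), so there is no proof in the text to compare against. Evaluating your sketch on its own terms, the opening steps are sound. Rewriting the denominator as $2i\sin(\pi w)$ and computing the residue at $w=n$ to be $n^{-s}/(2\pi i)$ (using $e^{i\pi n^2}=(-1)^n$ to cancel the $(-1)^n$ from $\cos\pi n$) is correct, and for each \emph{fixed} $N$ the Gaussian decay of $e^{i\pi w^2}$ as $|t|\to\infty$ along lines of slope $e^{-3\pi i/4}$ does make the connecting arcs vanish, so the identity $\mathcal{R}(s)-I_N(s)=\sum_{n=1}^{N}n^{-s}$ is legitimate.

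The genuine gap is in the third paragraph, and it is larger than you indicate. Once you accept $\mathcal{R}(s)-I_N(s)=\sum_{n\le N}n^{-s}$, letting $N\to\infty$ (for $\mathrm{Re}\,s>1$) gives $\lim_N I_N(s)=\mathcal{R}(s)-\zeta(s)$ \emph{tautologically}; this tells you the limit exists but gives no new information, since $\mathcal{R}(s)-\zeta(s)=-\chi(s)\overline{\mathcal{R}}(1-s)$ is precisely what you are trying to prove. To close the argument you would need an \emph{independent} evaluation of $\lim_N I_N(s)$, and the device you propose does not deliver one: the substitutions $w\mapsto -w$ or $w\mapsto 1-w$ send $C_N$ (through $N+\tfrac12$) to a line far in the \emph{left} half-plane, not to a contour in a fixed neighbourhood of $(0,1)$ as required for $\overline{\mathcal{R}}(1-s)$. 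Bringing that reflected contour back toward the unit interval would cross all the poles at the negative integers and branch behaviour of $w^{-s}$ at $0$, generating another infinite sum that must be controlled, and nothing in your outline explains how the prefactor $\chi(s)=\pi^{s-1/2}\Gamma\bigl(\tfrac{1-s}{2}\bigr)/\Gamma\bigl(\tfrac{s}{2}\bigr)$ actually emerges from that bookkeeping (the reflection formula alone produces a single $\sin$ or $\Gamma$ factor, not this ratio). A further warning sign: on $C_N$ the integrand is \emph{not} small -- along $w=(N+\tfrac12)+te^{-3\pi i/4}$ one computes $|e^{i\pi w^2}|=e^{\sqrt2\pi(N+1/2)t-\pi t^2}$, which peaks at $e^{\pi(N+1/2)^2/2}$ and is not tamed by $|\sin\pi w|\sim e^{\pi|t|/\sqrt2}$ -- so any attempt to estimate $I_N$ directly must exploit oscillatory cancellation rather than size, another ingredient missing from the sketch. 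You correctly flag this paragraph as the crux, but the route you indicate does not, as written, lead to the $\chi(s)\overline{\mathcal{R}}(1-s)$ term, and this is exactly the substantial content that Siegel's reconstruction supplies.
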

	
	The main ingredient of the Riemann--Siegel formula is the  asymptotic expansion for the function
	\[\mathcal{R}(s)=\int_{0\swarrow 1} \frac{w^{-s}e^{i\pi w^2}}{e^{i\pi w}-e^{-i\pi w}}dw.\] 
	 
		Let $\tau$ and $z$ be   complex numbers. For a real number $\sigma$, consider the function 
		\begin{equation}\label{20241103_2}g(\tau,z)=\exp\left\{-\left(\sigma+\frac{i}{8\tau^2}\right)\log(1+2i\tau z)-\frac{z}{4\tau}+i\frac{z^2}{4}\right\}.\end{equation}
		When $|2\tau z|<1$, Taylor series expansion about the point $z=0$ gives
		 \begin{align*}-\left(\sigma+\frac{i}{8\tau^2}\right)\log(1+2i\tau z)-\frac{z}{4\tau}+i\frac{z^2}{4}
			= \sum_{k=1}^{\infty}(-1)^{k}\left(\sigma\frac{(2i)^k}{k}z^k+\frac{(2i)^{k-1}}{k+2}z^{k+2}\right)\tau^k.
		\end{align*}Hence, $g(\tau, z)$ has an expansion of the form
		\begin{equation}\label{20241026_10}g(\tau, z)=\sum_{k=0}^{\infty} P_k(z)\tau^k,\end{equation}where  $P_k(z)$ is a polynomial in $z$ and $\sigma$.  In particular, $P_0(z)$ and $P_1(z)$ are given respectively by 
	\begin{equation}\label{20241027_5}P_0(z)=1, \hspace{1cm}P_1(z)= -\frac{1}{3}z^3-2i\sigma z.\end{equation}
	The following expansion of $\mathcal{R}(s)$ given in \cite{AriasdeReyna_2011}  is often attributed to Lehmer \cite{Lehmer56}. For a proof, see  \cite{AriasdeReyna_2011}. 
	\begin{theorem}
		Given $s=\sigma+it$ with $t>0$, let
		\begin{gather*}
			a =\sqrt{\frac{t}{2\pi}},\quad 
			N =\lfloor a\rfloor,\quad 
			p=1-2a+2N,\quad\tau=\frac{1}{\sqrt{8t}},\end{gather*}
			\begin{equation}\label{20250605_1}
			U =\exp\left\{-i\left[\frac{t}{2}\log\frac{t}{2\pi}-\frac{t}{2}-\frac{\pi}{8}\right]\right\}.
		\end{equation}Then $-1\leq p\leq 1$.
		For any nonnegative integer $K$,  the function 
		\[\mathcal{R}(s)=\int_{0\swarrow 1} \frac{w^{-s}e^{i\pi w^2}}{e^{i\pi w}-e^{-i\pi w}}dw\]
		has an expansion of the form
		\begin{equation}\label{20241026_4}\mathcal{R}(s)=\sum_{n=1}^N\frac{1}{n^s}+(-1)^{N-1}Ua^{-\sigma}\left\{\sum_{k=0}^K\frac{C_k(p)}{a^k}+RS_K(p)\right\},\end{equation}where
		\begin{align}
			C_k(p)&=\frac{e^{-\frac{i\pi}{8}}}{4}\frac{1}{(4\sqrt{\pi})^k}\int_{\searrow ip}\frac{e^{-\frac{i\pi}{2}(v-ip)^2}}{\cosh\frac{\pi}{2}v}P_k\left( \sqrt{\pi}(v-ip)\right)dv,\label{20241026_11}\\
			RS_K(p)&=\frac{e^{-\frac{i\pi}{8}}}{4}\int_{\searrow ip}\frac{e^{-\frac{i\pi}{2}(v-ip)^2}}{\cosh\frac{\pi}{2}v}Rg_K\left(\tau, \sqrt{\pi}(v-ip)\right)dv,\label{20250606_1}
		\end{align}
		$g(\tau, z)$ and $P_k(z)$ are defined in \eqref{20241103_2} and \eqref{20241026_10},  
		\[Rg_K(\tau, z)=g(\tau, z)-\sum_{k=0}^KP_k(z)\tau^k,\] and the integration path $\searrow ip$ is the line $ip+te^{-\frac{i\pi}{4}}, t\in\mb{R}$ that passes through the point $ip$ and pointing to the southeast direction as indicated by the arrow.
	\end{theorem}

	By definition \eqref{20250605_1}, $|U|=1$. Hence, \eqref{20241026_4} gives the following bound of $\mathcal{R}(\sigma+it)$:
	\begin{equation}\label{20241026_8}|\mathcal{R}(\sigma+it)|\leq \left|\sum_{n=1}^N\frac{1}{n^{\sigma+it}}\right|+\left(\frac{2\pi}{t}\right)^{\frac{\sigma}{2}}\left\{\sum_{k=0}^K |C_k(p)|\left(\frac{2\pi}{t}\right)^{\frac{k}{2}}+|RS_K(p)|\right\}.\end{equation}
	Since $P_0(z)=1$, $C_0(p)$ is independent of $\sigma$, but $C_k(p)$ depends on $\sigma$ for all $k\geq 1$. Similarly, $RS_K(p)$ depends on $\sigma$ for all $K\geq 0$. 
	
	Specializing to $\sigma=1$, \eqref{20241026_7} gives
	\[\zeta(1+it)=\mathcal{R}(1+it)+\chi(1+it)\overline{\mathcal{R}}(-it).\]As in \cite{Patel2022}, we take $K=1$ in \eqref{20241026_8} and obtain
	\begin{equation}\label{20241026_9}
		|\zeta(1+it)| 
		\leq \left|\sum_{n\leq \sqrt{t/(2\pi)}}\frac{1}{n^{1+it}}\right| +|\chi(1+it)|\left| \sum_{n\leq \sqrt{t/(2\pi)}}\frac{1}{n^{-it}}\right|+\varkappa(t),\end{equation}
	where
	\begin{align*}\varkappa(t)&=\sqrt{\frac{2\pi}{t}}\left(b_0+b_1(1)\sqrt{\frac{2\pi}{t}}+\frac{   c(1)}{ t}\right)+|\chi(1+it)|\left(b_0+b_1(0)\sqrt{\frac{2\pi}{t}}+\frac{  c(0)}{ t}\right),\end{align*}with
	\begin{align}b_0&=\max_{-1\leq p\leq 1}|C_0(p)|,\label{20250606_2}\\
		b_1(\sigma)&=\max_{-1\leq p\leq 1}|C_1(p)|,\label{20250606_3}\\
		c(\sigma)&=\max_{-1\leq p\leq 1}|tRS_1(p)|.\label{20250606_4}
	\end{align}In the following, we find upper bounds for $|\chi(1+it)|$, $b_0$, as well as $b_1(\sigma)$ and $c(\sigma)$ when $\sigma=0$ and $\sigma=1$, and compare our results to  \cite{Patel2022}.

	We start with $|\chi(1+it)|$. By Euler's reflection formula
		and Legendre's duplication formula, we find that
		\begin{align*}
			\chi(s)=\pi^{s-\frac{1}{2}}\frac{\Gamma\left(\frac{1-s}{2}\right)}{\Gamma\left(\frac{s}{2}\right)}=\pi^{s-\frac{1}{2}}\frac{\Gamma\left(\frac{1-s}{2}\right)\Gamma\left(\frac{1+s}{2}\right)}{\Gamma\left(\frac{s}{2}\right)\Gamma\left(\frac{1+s}{2}\right)}=2^{s-1}\pi^s\frac{1}{\sin   \frac{\pi(1-s)}{2} }\frac{1}{\Gamma(s)}.
		\end{align*}
	Hence, when $t>0$, 
		\begin{equation} \label{231217_5}\begin{split}
				\chi(1+it)&=-2^{it}\pi^{1+it}\frac{1}{\Gamma\left(1+it\right)}\frac{1}{\sin \frac{\pi i t}{2}}=2^{1+it}i\pi^{1+it}\frac{1}{\Gamma\left(1+it\right)}\frac{1}{\left(e^{\frac{\pi t}{2}}-e^{-\frac{\pi t}{2}}\right)}.  
		\end{split}\end{equation}
By Theorem 1.4.2 in \cite{Andrews}, we find that when $\text{Re}\,s>0$,
		\begin{align*}
			\log\Gamma(s)=\left(s-\frac{1}{2}\right)\log s-s+\frac{1}{2}\log(2\pi)+\frac{1}{12s}-\frac{1}{2}\int_0^{\infty}\frac{B_2(\{x\})}{(x+s)^2}dx,
		\end{align*}where $B_2(x)=x^2-x+\frac{1}{6}$ is the second Bernoulli polynomial. As explained in Section \ref{sec2}, we can replace $B_2(x)$ with the function $h(x)=x^2-x+\frac{1}{8}$ and obtain 		
		\begin{equation}\label{231217_1}\log\Gamma(s)=\left(s-\frac{1}{2}\right)\log s-s+\frac{1}{2}\log(2\pi)+\frac{1}{16s}-\frac{1}{2}\int_0^{\infty}\frac{h(\{x\})}{(x+s)^2}dx.\end{equation}
		In the following theorem, we use this formula to obtain an upper bound of $|\chi(1+it)|$.
	\begin{theorem}\label{231217_7}
		For $t>0$, we have
		\begin{equation}\label{20250606_5}|\chi(1+it)|\leq \sqrt{\frac{2\pi}{t}}\exp\left(\frac{\pi}{32t}-\frac{1}{24t^2}+\frac{5}{24t^4}\right)\frac{1}{1-e^{-\pi t}}.\end{equation}
	\end{theorem}
	\begin{proof}	 
	When $t>0$, \eqref{231217_5} gives
		\begin{equation}\label{20250605_2}
			|\chi(1+it)|=2\pi\left|\frac{1}{\Gamma\left(1+it\right)}\right|\frac{e^{-\frac{\pi t}{2}}}{\left(1-e^{- \pi t }\right)}.
		\end{equation} When $t>0$,
		\[\log (1+it)=\frac{1}{2}\log(1+t^2)+i\tan^{-1}t.\] 
	Hence \eqref{231217_1} gives
		\begin{equation}\label{20241103_1}\begin{split}
		\log\frac{1}{|\Gamma(1+it)|}&=	-\text{Re}\,  \log\Gamma(1+it)\\& =-\frac{1}{4}\log(1+t^2)+t\tan^{-1}t+1-\frac{1}{2}\log(2\pi) -\frac{1}{16(1+t^2)}
				\\&\quad +\frac{1}{2}\text{Re}\,\int_0^{\infty}\frac{h(\{x\}) }{ \left(x+1+it\right)^2}dx.
		\end{split}\end{equation}
	By Taylor's remainder theorem,
	\[\log\left(1+\frac{1}{t^2}\right)\geq \frac{1}{t^2}-\frac{1}{2t^4}.\]Therefore,
	 \[\log(1+t^2) =2\log t+\log\left(1+\frac{1}{t^2}\right)\geq 2\log t+\frac{1}{t^2}-\frac{1}{2t^4}.\]
	 It is easy to see that
	 \[\frac{1}{1+u^2}\geq 1-u^2\hspace{1cm}\text{for all}\;u\in\mathbb{R}.\]
	 Therefore,
		  \begin{align*}
			 \frac{1}{ 1+t^2}= \frac{1}{t^2\left(1+\di\frac{1}{t^2}\right)}\geq\frac{1}{t^2}\left(1-\frac{1}{t^2}\right)\geq \frac{1}{t^2}-\frac{1}{ t^4}.
		\end{align*}
		On the other hand, when $x\geq 0$,
		\[\tan^{-1}x=\int_0^x\frac{1}{1+u^2}du\geq \int_0^x (1-u^2)du=x-\frac{x^3}{3}.\]
		This gives
		\[t\tan^{-1}t+1=t\left(\frac{\pi}{2}-\tan^{-1}\frac{1}{t}\right)+1\leq\frac{\pi t}{2}+\frac{1}{3t^2}.\]
		For the last term in \eqref{20241103_1}, we find that
		\begin{align*}
			\frac{1}{2}\text{Re}\,\int_0^{\infty}\frac{h(\{x\}) }{ \left(x+1+it\right)^2}dx&\leq \frac{1}{2}\left|\int_0^{\infty}\frac{h(\{x\}) }{ \left(x+1+it\right)^2}dx\right|\\&\leq \frac{1}{16}\int_0^{\infty}\frac{1}{(x+1)^2+t^2}dx
		 =\frac{1}{16}\int_1^{\infty}\frac{dx}{x^2+t^2}\\&=\frac{1}{16 t}\left(\frac{\pi}{2}-\tan^{-1}\frac{1}{t}\right) \leq\frac{1}{16}\left(\frac{\pi}{2t}-\frac{1}{t^2}+\frac{1}{3t^4}\right).
		\end{align*}
		Collecting together the estimates, we find that
		\begin{align*}
			\log\left|\frac{1}{\Gamma(1+it)}\right|&\leq  
			 -\frac{1}{2}\log (2\pi t)+\frac{\pi t}{2}+\frac{\pi}{32t}-\frac{1}{24t^2}+\frac{5}{24t^4}.
		\end{align*}
		From this, we conclude that
		\[\left|\frac{1}{\Gamma(1+it)}\right|\leq \frac{1}{\sqrt{2\pi t}}\exp\left(\frac{\pi t}{2}+\frac{\pi}{32t}-\frac{1}{24t^2}+\frac{5}{24t^4} \right). \]Then \eqref{20250606_5} follows from  \eqref{20250605_2}. \qedhere
	\end{proof}
	Our result \eqref{20250606_5} is better than the bound
	\[|\chi(1+it)|\leq \sqrt{\frac{2\pi}{t}}\exp\left(\frac{\pi}{6t}+\frac{5}{3t^2} \right)\]obtained in \cite{Patel2022}.  The reason behind this is we use the alternate expression \eqref{231217_5} to bound $|\chi(1+it)|$ while Patel  \cite{Patel2022} obtained bounds for the terms $|\Gamma(\frac{-it}{2})|$ and $|\Gamma(\frac{1+it}{2})|$  in the numerator and denominator of \eqref{231217_2} separately.

	Next we turn to $b_0$ \eqref{20250606_2}. By Theorem 6.1 in  \cite{AriasdeReyna_2011}, we have the following.
	\begin{theorem}\label{20241103_3}
		For $p\in [-1,1]$, let
		\begin{equation}\label{20241016_13}C_0(p)=\frac{e^{-\frac{i\pi}{8}}}{4} \int_{ \searrow ip}\frac{e^{-\frac{\pi i}{2}(v-ip)^2}}{\cosh\frac{\pi v}{2}}dv.\end{equation}
		Then
		\[b_0=\max_{-1\leq p\leq 1}|C_0(p)|= \frac{1}{2}.\]
		
	\end{theorem}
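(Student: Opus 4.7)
The plan is to derive an explicit closed form for $C_0(p)$ via residue calculus and then to analyze $|C_0(p)|$ directly.

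First I would deform the contour from $\searrow ip$ to $\searrow 0$; this is valid for $-1<p<1$ since all singularities of $1/\cosh(\pi v/2)$ lie at $v\in i(2\mathbb{Z}+1)$, outside the strip between the two parallel lines, and the integrand decays like $e^{-\pi r^2/2}$ on each line. Writing $\Phi(p):=4e^{i\pi/8}C_0(p)$, the substitution $v\mapsto -v$ together with $\cosh(-\pi v/2)=\cosh(\pi v/2)$ yields the evenness $\Phi(-p)=\Phi(p)$. Shifting $\searrow 0$ downward by $2i$ picks up the simple pole at $v=-i$ (where the residue of $1/\cosh(\pi v/2)$ equals $2i/\pi$, as $\cosh'(-\pi i/2)\cdot(\pi/2)=-i\pi/2$); combining this with $\cosh(\pi(v-2i)/2)=-\cosh(\pi v/2)$ yields the functional equation
\[
\Phi(p)+\Phi(p+2)=4e^{i\pi(1+p)^2/2}.
\]

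Next, a short calculation using $1+e^{2\pi i p}=2\cos(\pi p)e^{i\pi p}$ verifies that $\Phi_0(p):=2ie^{i\pi p^2/2}/\cos(\pi p)$ is a particular solution of the functional equation. Since $\Phi$ is entire in $p$ while $\Phi_0$ has simple poles at $p\in\tfrac12+\mathbb{Z}$, the discrepancy $\Phi-\Phi_0$ must be an even, antiperiodic (in the sense $h(p+2)=-h(p)$) meromorphic function whose residues cancel those of $\Phi_0$. Matching residues at $p=\pm\tfrac12$ singles out $h(p)=-2i\sqrt{2}e^{i\pi/8}\cos(\pi p/2)/\cos(\pi p)$ (antiperiodicity and evenness then force the correct residues at every other half-integer automatically), giving the closed form
\[
C_0(p)=\frac{ie^{i(\pi p^2/2-\pi/8)}-i\sqrt{2}\cos(\pi p/2)}{2\cos(\pi p)}.
\]

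Finally, I would take moduli to obtain
\[
4\cos^2(\pi p)\,|C_0(p)|^2=1+2\cos^2(\pi p/2)-2\sqrt{2}\cos(\pi p/2)\cos\!\left(\tfrac{\pi p^2}{2}-\tfrac{\pi}{8}\right).
\]
At $p=\pm 1$ we have $\cos(\pi p/2)=0$ and $\cos(\pi p)=-1$, so the right-hand side equals $1$ and $|C_0(\pm 1)|=1/2$. To show this is the maximum on $[-1,1]$, setting $u=\cos(\pi p/2)$ and $\theta=\pi p^2/2-\pi/8$ reduces $|C_0(p)|\le 1/2$ to the elementary inequality $2u(2u^3-3u+\sqrt{2}\cos\theta)\ge 0$, which I would check along the one-parameter arc traced by $p\in[-1,1]$, paying particular attention to the removable zero/zero at $p=\pm\tfrac12$, where an expansion shows $|C_0(\pm\tfrac12)|^2=1/8<1/4$.

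The main obstacle is uniqueness of the closed form: the functional equation together with entireness and evenness still admits additional entire antiperiodic corrections of the form $\cos((2k+1)\pi p/2)$ for $k\ge 1$. Ruling these out requires an auxiliary input, either a growth estimate on $\Phi(p)$ inherited from the Gaussian integrand, or a direct evaluation of $\Phi(0)$ via a Fresnel-type integral against which the proposed formula can be matched numerically (my preliminary computation at $p=0$ gives $\Phi(0)\approx 1.082-0.614i$, in exact agreement with $2i(1-\sqrt{2}e^{i\pi/8})$).
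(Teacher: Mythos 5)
Your proposal and the paper converge on the same closed form for $C_0(p)$; indeed $ie^{i(\pi p^2/2-\pi/8)}=e^{i\pi(p^2/2+3/8)}$, so your expression agrees exactly with the paper's \eqref{20241016_15}. The difference is how you get there and how you finish. The paper simply cites the explicit evaluation of $C_0(p)$ (referring to Chandrasekharan), notes evenness, and then asserts from a plotted figure that $|C_0(p)|$ is increasing on $[0,1]$, from which $b_0=|C_0(1)|=\tfrac12$. You instead attempt an independent derivation: the contour shift to obtain the two-term functional equation $\Phi(p)+\Phi(p+2)=4e^{i\pi(1+p)^2/2}$, the particular solution $\Phi_0$, and the residue matching that produces $h$, are all correct as far as they go, and your reduction of the maximum question to the scalar inequality $2u\bigl(2u^3-3u+\sqrt{2}\cos\theta\bigr)\ge 0$ with $u=\cos(\pi p/2)$, $\theta=\pi p^2/2-\pi/8$, is an elegant and genuinely cleaner target than ``the graph is increasing.'' In particular it isolates the boundary case $p=\pm\tfrac12$ (where the expression vanishes, consistent with the removable singularity and $|C_0(\pm\tfrac12)|^2=\tfrac18$) and the attained equality at $p=\pm1$.

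That said, you have honestly flagged the two places where your argument still needs to be closed, and they are real. First, the uniqueness gap: the functional equation, evenness, and entireness do not alone pin down $\Phi$; you need either the Gaussian growth estimate in $p$ (which $\Phi$ inherits from its integral definition and which the extra $\cos((2k+1)\pi p/2)$ corrections would violate) or an independent evaluation at one point, and this must actually be supplied, not just gestured at. Second, the inequality $2u^3-3u+\sqrt{2}\cos\theta\ge 0$ along the curve $p\in[0,1]$ is true (its minimum is $0$ at $p=\tfrac12$), but it is not ``elementary'' in the sense of following from a one-line algebraic manipulation: you would still verify it by an interval-by-interval or numerical check, which puts you on the same footing as the paper's figure. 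So while your route is more constructive and gives a precise inequality to check, both proofs end up leaning on a computational verification of a one-parameter statement, and your derivation of the closed form trades the paper's citation for a residue/functional-equation argument that is appealing but incomplete until the uniqueness step is done.
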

	\begin{proof}The proof given in \cite{AriasdeReyna_2011} is  for the more general case. Here we give a  straightforward proof.
		As mentioned in \cite{AriasdeReyna_2011}, the integral \eqref{20241016_13}
		 can be evaluated explicitly, and it is given by (see for example \cite{Chandrasekharan}) 
		\begin{equation}\label{20241016_15}C_0(p)=\frac{1}{2\cos\pi p}\left(\exp\left\{\pi i\left(\frac{p^2}{2}+\frac{3}{8}\right)\right\}-i\sqrt{2}\cos\frac{\pi p}{2}\right).\end{equation}
		This is an entire function, and $C_0(-p)=C_0(p)$. Hence, to determine $b_0$, it is sufficient to consider $C_0(p)$ for $0\leq p\leq 1$.
		Figure \ref{fig3} shows the function $|C_0(p)|$ when $p\in [0,1]$.

		\begin{figure}[ht]
			
			\begin{center}  \includegraphics[scale=0.5]{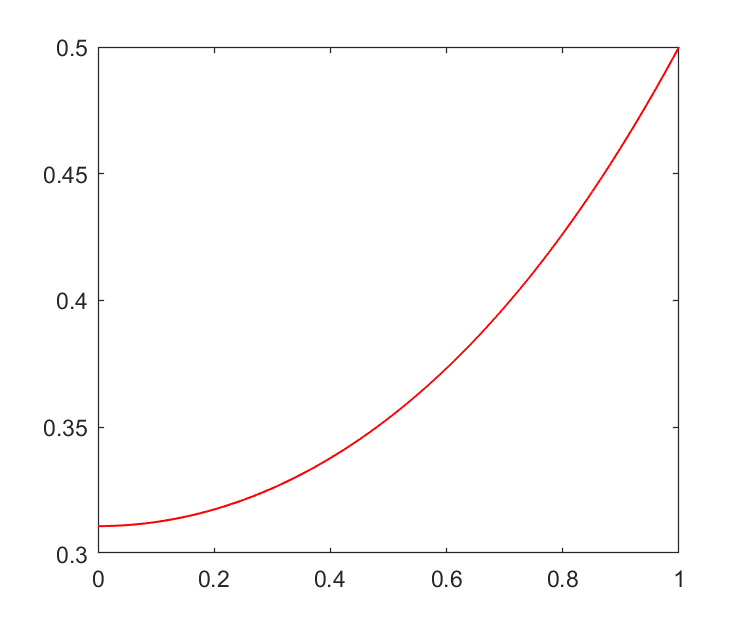}\end{center}
			
			\caption{The function $|C_0(p)|$ when $0\leq p\leq 1$.\label{fig3}}
		\end{figure}
		
		From Figure \ref{fig3}, we find that $|C_0(p)|$ is increasing on $[0,1]$. Since 
		\[C_0(1)=-\frac{1}{2}e^{\frac{7\pi}{8}i},\]
		we find that
		\[b_0=|C_0(1)|=\frac{1}{2}. \qedhere\]
	\end{proof}
	Now we consider $b_1(\sigma)$ \eqref{20250606_3}.
	By \eqref{20241026_11}, we find that
	\begin{equation}\label{20241016_14}
		C_1(p) =\frac{e^{-\frac{i\pi}{8}}}{16\sqrt{\pi}}\int_{ \searrow ip}\frac{e^{-\frac{\pi i}{2}(v-ip)^2}}{\cosh\frac{\pi v}{2}}P_1\left(\sqrt{\pi}(v-ip)\right)dv,
	\end{equation}
	where  
	\begin{align*}
		P_1(z)= -\frac{1}{3}z^3-2i\sigma z
	\end{align*} is the polynomial  given in \eqref{20241027_5}.
	
	\begin{theorem}\label{20241103_4}
		Let
		\[b_1(\sigma)=\max_{-1\leq p\leq 1}|C_1(p)|.\]
		Then 
		\begin{equation}\label{20250606_6}
		b_1(0)= 0.0173\hspace{1cm}\text{and}\hspace{1cm} b_1(1)= 0.0932.\end{equation}
	\end{theorem}
	\begin{proof}
		By definition,
		\begin{equation*}
			 C_1(p) = \frac{e^{-\frac{i\pi}{8}}}{16\sqrt{\pi}}\int_{ \searrow ip}\frac{e^{-\frac{\pi i}{2}(v-ip)^2}}{\cosh\frac{\pi v}{2}} \left(-\frac{1}{3}\left[\sqrt{\pi}(v-ip)\right]^3-2i\sigma\sqrt{\pi}(v-ip)\right)dv.\end{equation*}
			Using the definition \eqref{20241016_13} for  $C_0(p)$,  we find that
			\begin{equation}\label{20241016_16} C_1(p) =\frac{1}{12\pi^2}C_0'''(p)+\frac{(1-2\sigma) }{4 i\pi}C_0'(p). 
		\end{equation}
		Since $C_0(p)$ is an even function, 
		$C_1(p)$ is an odd function. Hence, to find a bound of $|C_1(p)|$ for $p\in[-1,1]$, it is sufficient to consider $|C_1(p)|$ when $p\in [0,1]$. 
		Using the explicit expression for $C_0(p)$ given by \eqref{20241016_15}, we can compute $C_1(p)$ by \eqref{20241016_16}. Figure \ref{fig5} and Figure \ref{fig4} show  the graphs of $|C_1(p)|$, $p\in [0,1]$ when $\sigma=0$ and $\sigma=1$ respectively.
		
		\begin{figure}[ht]

			\begin{center}  \includegraphics[scale=0.5]{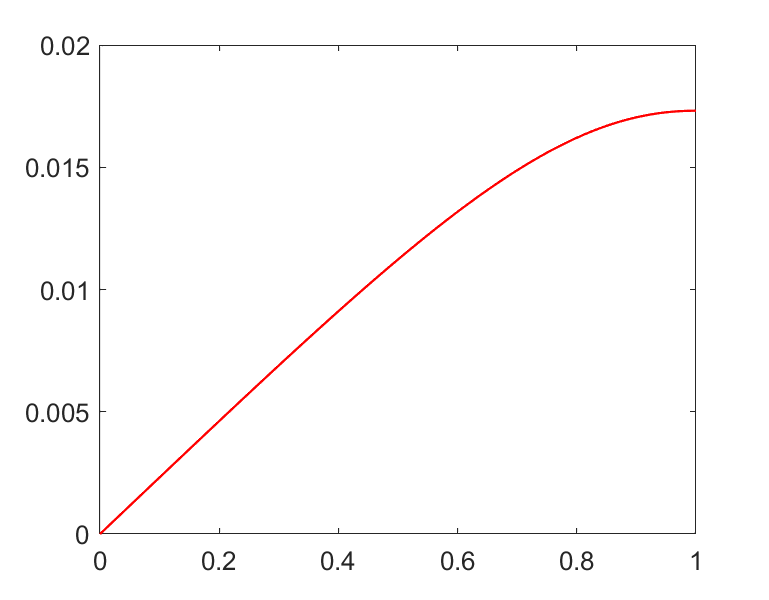}\end{center}
			
			\caption{The function $|C_1(p)|$ when $\sigma=0$ and  $0\leq p\leq 1$.\label{fig5}}
		\end{figure}
		
		\begin{figure}[ht]
			
			\begin{center}  \includegraphics[scale=0.5]{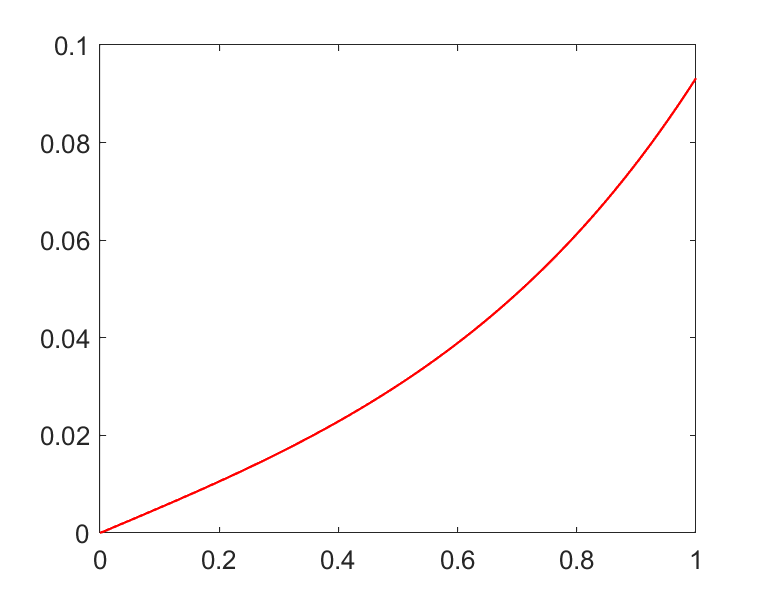}\end{center}
			
			\caption{The function $|C_1(p)|$ when $\sigma=1$ and  $0\leq p\leq 1$.\label{fig4}}

		\end{figure}
		
		From Figure \ref{fig5}, we find that when $\sigma=0$, $|C_1(p)|$ is increasing on $[0,1]$. Hence,  
		\[b_1(0)=|C_1(1)_{\sigma=0}|=0.0173.\]
		From Figure \ref{fig4}, we find that when $\sigma=1$, $|C_1(p)|$ is increasing on $[0,1]$. Hence, 
		\[b_1(1)=|C_1(1)_{\sigma=1}|=0.0932.\]
		This completes the proof. \qedhere
		
	\end{proof}
	In \cite{Patel2022}, Theorem 4.1 in \cite{AriasdeReyna_2011} is quoted directly to give the bounds
	\[b_1(0)\leq \frac{1}{\pi\sqrt{2(3-2\log 2)}}=0.1772,\hspace{1cm}b_1(1)\leq \frac{9}{2\sqrt{2\pi}}=1.7952.\]
	Obviously, our exact results \eqref{20250606_6} are much better.

	Finally, we consider 
	\[c(\sigma)=\max_{-1\leq p\leq 1}|tRS_1(p)|,\]where $RS_1(p)$ is defined by \eqref{20250606_1}.
	 In  the work \cite{AriasdeReyna_2011}, Arias de Reyna has obtained a bound $RS_K(p)$ for all $K\geq 1$. We specialize to the case where $K=1$ and obtain a better upper bound here.
	
	\begin{theorem}\label{20241103_5}
		Let
		\[c(\sigma) =\max_{-1\leq p\leq 1}|tRS_1(p)|.\]Then
		\[c(0)\leq 0.9704,\hspace{1cm}c(1)\leq 1.0450.\]
	\end{theorem}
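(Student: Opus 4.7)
My plan is to follow the framework of \cite{AriasdeReyna_2011} specialized to $K=1$ and $\sigma\in\{0,1\}$. The miracle that makes $t\,RS_1(p)$ bounded (rather than blowing up) is that $Rg_1(\tau,z)=g(\tau,z)-1-P_1(z)\tau$ vanishes to order $\tau^2$ in $\tau$, and since $\tau^2=\tfrac{1}{8t}$, multiplication by $t$ exactly cancels this vanishing. The first step is to parameterize the contour $\searrow ip$ by $v=ip+ue^{-i\pi/4}$, $u\in\mathbb{R}$, so that $(v-ip)^2=-iu^2$ and therefore $e^{-\frac{i\pi}{2}(v-ip)^2}=e^{-\pi u^2/2}$, with $dv=e^{-i\pi/4}du$. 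This converts $RS_1(p)$ into a real Gaussian-weighted integral
\[
|RS_1(p)|\leq \frac{1}{4}\int_{-\infty}^{\infty} \frac{e^{-\pi u^2/2}}{\bigl|\cosh\tfrac{\pi}{2}(ip+ue^{-i\pi/4})\bigr|}\,\bigl|Rg_1\bigl(\tau,\sqrt{\pi}ue^{-i\pi/4}\bigr)\bigr|\,du.
\]

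Next I would derive an explicit bound on $|Rg_1(\tau,z)|$ along this contour, splitting into two regimes according to whether $|2\tau z|<1$ or not. In the near regime $|2\tau z|<1$, the series $Rg_1(\tau,z)=\sum_{k\geq 2}P_k(z)\tau^k$ converges and each coefficient $P_k(z)$ is a polynomial in $z$ whose coefficients can be estimated from the Cauchy-product formula used in the proof of Lemma \ref{230823_2}; this yields a bound of the form $|Rg_1|\leq K_1(\sigma)|z|^4\tau^2\big/(1-|2\tau z|)$. In the far regime $|2\tau z|\geq 1$, the Taylor series need not converge, but I instead bound $|g(\tau,z)|$ directly from the real part of the exponent in \eqref{20241103_2}, then apply the triangle inequality against $|1+P_1(z)\tau|$. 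Because $z=\sqrt{\pi}ue^{-i\pi/4}$, the far regime corresponds to $|u|\gtrsim\sqrt{t/\pi}$, where the Gaussian weight $e^{-\pi u^2/2}$ crushes any polynomial growth. In parallel, writing $v=\frac{u}{\sqrt{2}}+i(p-\frac{u}{\sqrt{2}})$ gives $|\cosh\tfrac{\pi v}{2}|^2=\sinh^2\!\tfrac{\pi u}{2\sqrt{2}}+\cos^2\!\tfrac{\pi}{2}(p-\tfrac{u}{\sqrt{2}})$, which is uniformly bounded away from zero for $p\in(-1,1)$; the endpoints $p=\pm 1$ are handled by a limiting argument since the singularity of $1/\cosh$ at $v=\pm i$ is simple and is cancelled by the vanishing of the Gaussian factor after the contour deformation.

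Assembling these estimates and using $t\tau^2=\tfrac{1}{8}$ gives
\[
t|RS_1(p)|\leq \Psi(\sigma,t,p)
\]
where $\Psi$ is an explicit integral, decreasing in $t$ (through the size of the two regimes) and continuous in $p$. The last step is to take the supremum over $p\in[-1,1]$ and evaluate $\Psi$ numerically (using interval arithmetic or rigorous quadrature) for $\sigma=0$ and $\sigma=1$, arriving at the stated constants $0.9704$ and $1.0450$. The principal obstacle I expect is Step~2: obtaining a bound on $|Rg_1|$ that is simultaneously tight enough in the near regime to produce such small final constants, and loose enough in the far regime to remain rigorous — a naive use of $|g|+|1+P_1\tau|$ would overshoot the target by a large factor, so the careful treatment in \cite{AriasdeReyna_2011}, which exploits cancellations between $g$ and its Taylor polynomial via an integral representation of the remainder, is essential.
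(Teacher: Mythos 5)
Your plan correctly identifies the contour parameterization of $\searrow ip$ and the fact that the factor of $t$ is absorbed by $\tau^2 = \tfrac{1}{8t}$, but the core of your argument — bounding $|Rg_1(\tau,z)|$ pointwise by splitting into a near regime $|2\tau z|<1$ (Taylor tail) and a far regime $|2\tau z|\ge 1$ (direct bound on $|g|$ plus $|1+P_1\tau|$) — is not what the paper does, and you yourself flag why it cannot work as stated: the triangle-inequality bound in the far regime throws away cancellations and would not land anywhere near $0.97$ or $1.05$. The paper instead writes the Taylor remainder as a Cauchy integral,
\[
Rg_1(\tau,z)=\frac{\tau^2}{2\pi i}\int_C \frac{g(u,z)}{u^2(u-\tau)}\,du,
\]
substitutes $u\mapsto iu/(2z)$ so that $g(iu/2z,z)=(1-u)^{-\sigma}e^{-\frac{i z^2}{2}f(u)}$ with $f(u)=-\tfrac12-\tfrac1u-\tfrac1{u^2}\log(1-u)$, deforms the contour to the fixed line $L:\,u=\tfrac12+ye^{i\pi/4}$, and then applies Fubini so that the \emph{inner} integral is the Gaussian one in $v$; this inner integral is evaluated exactly as $\tfrac{16}{\pi^2(1+V(u))}$ with $V=\mathrm{Re}\,f$, leaving a single one-dimensional integral $\tfrac{1}{\pi^2}\int_{\mathbb R} H(\sigma,y)\,dy$ that is \emph{independent of both $t$ and $p$}.

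That $t$- and $p$-independence is the decisive feature your approach does not deliver. Your $\Psi(\sigma,t,p)$ retains explicit $t$-dependence through the location of the near/far cut at $|u|\sim\sqrt{t/\pi}$. You claim $\Psi$ is decreasing in $t$; even granting that, the supremum over the allowed range of $t$ is taken at the smallest $t$, and you give no estimate showing this supremum is at most $0.9704$ (resp.\ $1.0450$) — indeed the crude far-regime bound you propose would be far too large there. In short, you have correctly named the essential ingredient (the integral representation of the remainder that realizes the cancellation between $g$ and its Taylor polynomial) but have not executed it; as written, the proposal would not produce the stated constants. A secondary, easily fixable difference: rather than your $p$-dependent expression $|\cosh\tfrac{\pi v}{2}|^2=\sinh^2\tfrac{\pi u}{2\sqrt2}+\cos^2\tfrac{\pi}{2}\bigl(p-\tfrac{u}{\sqrt2}\bigr)$, which degenerates as $p\to\pm1$, the paper simply uses $|\cosh(\alpha+i\beta)|\geq\sinh|\alpha|\geq|\alpha|$ to get the $p$-uniform bound $|\cosh\tfrac{\pi v}{2}|\geq \tfrac{\pi|x|}{2\sqrt2}$, which also feeds cleanly into the Gaussian integral.
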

	\begin{proof}
		As in \cite{AriasdeReyna_2011}, let 
		\[f(u)=-\frac{1}{2}-\frac{1}{ u}-\frac{1}{ u^2}\log(1-u).\]
		 Then by   (4.9) in \cite{AriasdeReyna_2011}, we have \begin{equation*}
			|tRS_1(p)| \leq  \frac{1}{\pi^2}\int_{-\infty}^{\infty} H(\sigma,y)dy,
		\end{equation*}
		where\[H(\sigma, y)=|1-u(y)|^{-\sigma}|u(y)|^{-2}\frac{1}{ 1+V(u(y))},\]
		\[u(y)=\frac{1}{2}+ye^{\frac{i\pi}{4}},\hspace{1cm} V(u)=\text{Re}\,f(u).\]
		Using numerical calculations, we obtain
		\[\frac{1}{\pi^2}\int_{-\infty}^{\infty}H(0,y) dy=0.9704,\hspace{1cm}\frac{1}{\pi^2}\int_{-\infty}^{\infty}H(1,y) dy=1.0450. \qedhere\]

	\end{proof}
	 In \cite{Patel2022}, Patel cited Theorem 4.2 in  \cite{AriasdeReyna_2011} with $K=1$ to obtain 
	\[c(0)\leq \frac{121\pi}{350}=1.0861,\hspace{1cm} c(1)\leq \frac{242\sqrt{2}\pi}{350}=3.0719.\]Our explicit computations in Theorem \ref{20241103_5} give smaller upper bounds.

	Now we return to the estimate of $|\zeta(1+it)|$.
	From \eqref{20241026_9}, Theorems \ref{231217_7}, \ref{20241103_3},  \ref{20241103_4}, \ref{20241103_5}, we find  that when $t>0$,
	\begin{equation}\label{20241027_3}
		|\zeta(1+it)| 
		\leq \left|\sum_{n\leq \sqrt{t/(2\pi)}}\frac{1}{n^{1+it}}\right| +\kappa_1(t)\left| \sum_{n\leq \sqrt{t/(2\pi)}}\frac{1}{n^{-it}}\right|+\kappa_2(t),\end{equation}
	where
	\[\kappa_1(t)= \sqrt{\frac{2\pi}{t}}\exp\left(\frac{\pi}{32t}-\frac{1}{24t^2}+\frac{5}{24t^4}\right)\frac{1}{1-e^{-\pi t}},\]
	\begin{align*}\kappa_2(t)&=\sqrt{\frac{2\pi}{t}}\left(\frac{1}{2}+b(1)\sqrt{\frac{2\pi}{t}}+\frac{   c(1)}{ t}\right)+\kappa_1(t)\left(\frac{1}{2}+b(0)\sqrt{\frac{2\pi}{t}}+\frac{  c(0)}{ t}\right),\end{align*}with
	\begin{align*}b(0)= 0.0173,\quad  b(1)= 0.0932,\quad 
		c(0)=0.9704,\quad c(1)= 1.0450.
	\end{align*}

	Notice that 
	\[\kappa_2(t)=O\left(\frac{1}{\sqrt{t}}\right).\]
	 For the two sums over $n\leq a=\sqrt{t/(2\pi)}$, using exponential sums with second order derivatives \eqref{20250529_3}, one would not be able to  obtain   estimates that are better than the crude estimates
	\[  \left|\sum_{n\leq a}\frac{1}{n^{1+it}}\right|\leq\sum_{n\leq a}\frac{1}{n}\leq \log a+\gamma+\frac{1}{a}\hspace{1cm} \text{and}\hspace{1cm}
	\left|\sum_{n\leq a}\frac{1}{n^{it}}\right|\leq a.\]
	Thus, we use these crude estimates and obtain
	\begin{align*}|\zeta(1+it)| 
		&\leq\frac{1}{2}\log t+\gamma-\frac{1}{2}\log(2\pi)+\vartheta(t),\end{align*}
	where
	\begin{align*}
		\vartheta(t)=\sqrt{\frac{2\pi}{t}}+\exp\left(\frac{\pi}{32t}-\frac{1}{24t^2}+\frac{5}{24t^4}\right)\frac{1}{1-e^{-\pi t}} +\kappa_2(t).
	\end{align*}It is easy to see that $\vartheta(t)$ is  decreasing when $t\geq 1$. Hence, for any $t_0\geq 1$,  if we let \begin{equation}\label{20250914_8}C=\gamma-\frac{1}{2}\log(2\pi)+\vartheta(t_0),\end{equation}
then
	\[|\zeta(1+it)|\leq\frac{1}{2}\log t+C\hspace{1cm}\text{when}\;t\geq t_0.\]
	In Table \ref{tab2}, we list down the values of $C$ for different $t_0$. 
	
	\begin{table}[ht]\caption{The values of $C$ \eqref{20250914_8} for different $t_0$.\label{tab2}}
	\begin{tabular}{|c|c|}
	\hline
		$t_0$	& 	$C$	\\\hline
	$10^{1}	$&$	2.4868	$\\\hline
	$10^{2}	$&$	1.1727	$\\\hline
	$10^{3}	$&$	0.8178	$\\\hline
	$10^{4}	$&$	0.7085	$\\\hline
	$10^{5}	$&$	0.6741	$\\\hline
	$10^{6}	$&$	0.6633	$\\\hline
	$10^{7}	$&$	0.6599	$\\\hline
	$10^{8}	$&$	0.6588	$\\\hline
	$10^{9}	$&$	0.6584	$\\\hline
        $	\hspace{0.5cm}10^{10}\hspace{0.5cm}	$&$	\hspace{0.5cm} 0.6583\hspace{0.5cm}	$\\\hline

	\end{tabular}
	\end{table}

	Since
	\[\lim_{t\to\infty}\vartheta(t)=1 \hspace{1cm}
	 \text{and}
	\hspace{1cm}\gamma-\frac{1}{2}\log(2\pi)= -0.3417,\] we find that
	\[C\geq \gamma-\frac{1}{2}\log(2\pi)+1=0.6583.\] In other words,
	we cannot use this method to yield a bound for $|\zeta(1+it)|$ that is better than
	\[\frac{1}{2}\log t+1-0.3417=\frac{1}{2}\log t+0.6583.\]
	Since we only perform numerical calculations of $\zeta(1+it)$ for $t\leq 10^6$, we take $t_0=10^6$. In this case, $C =0.6633$, 
	and we have
	\[|\zeta(1+it)|\leq \frac{1}{2}\log t+0.6633\hspace{1cm}\text{for}\; t\geq 10^6.\]
	The same code in Section \ref{sec2} is used for the numerical calculations of $|\zeta(1+it)|$ and for plotting Figure \ref{fig7}. The graphs show that for $e\leq t \leq 10^6$, $|\zeta(1+it)|$ is bounded above by $\frac{1}{2}\log t+0.6633$. Thus, 
\[|\zeta(1+it)|\leq \frac{1}{2}\log t+0.6633\hspace{1cm}\text{for}\; e\leq t\leq 10^6.\]
	\begin{figure}[ht]
		\begin{center}
			\includegraphics[scale=0.34]{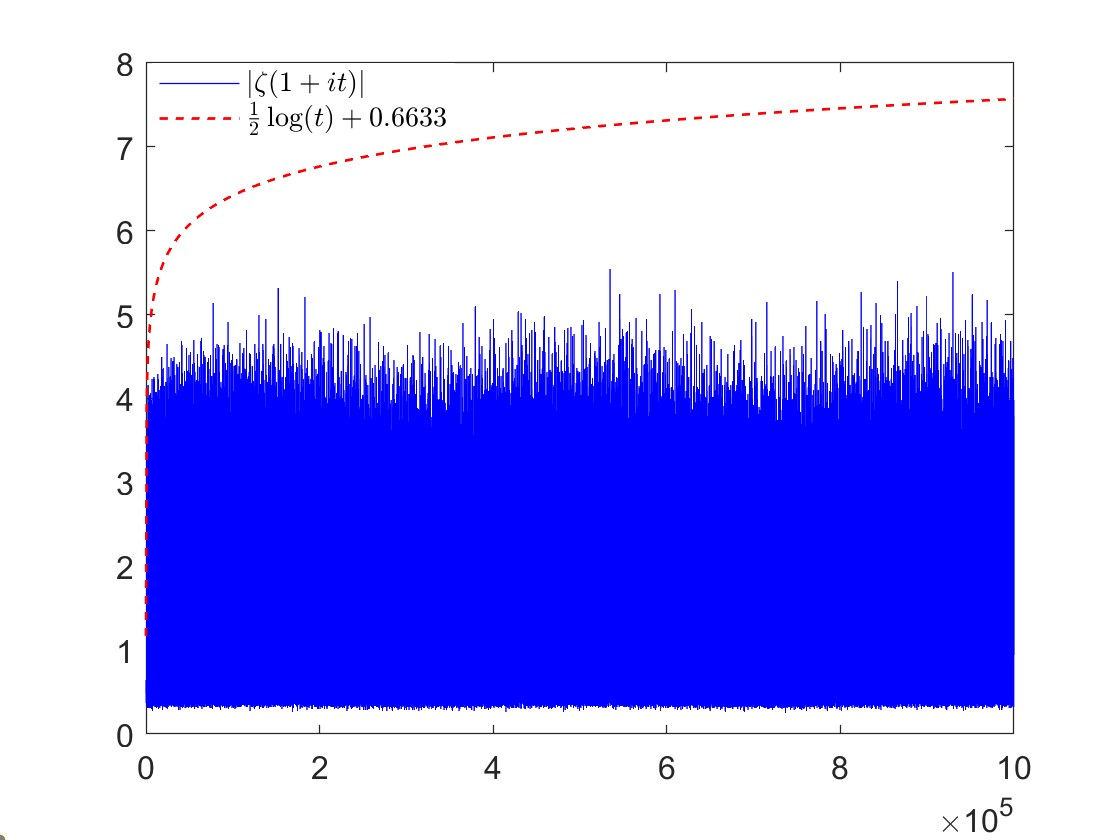}  \includegraphics[scale=0.34]{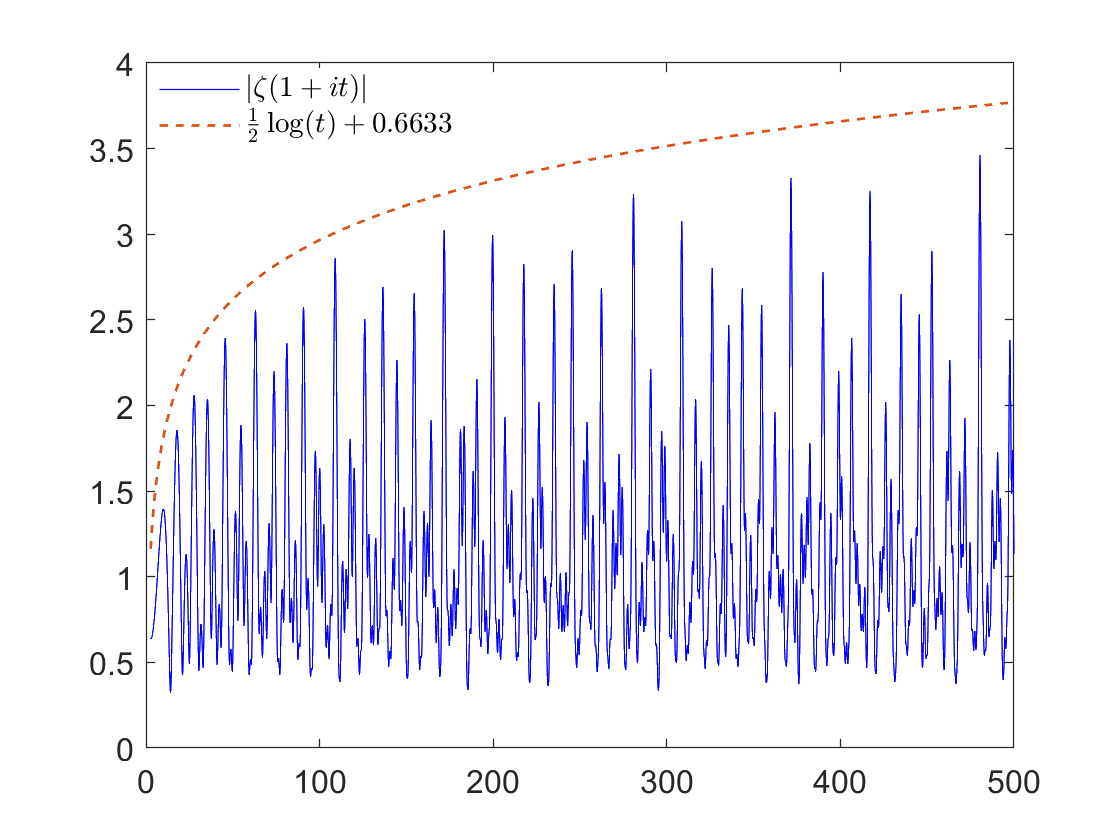}
		\end{center}
		\caption{The figures show comparisons of $|\zeta(1+it)|$ with $\frac{1}{2}\log t+0.6633$ when $e\leq t\leq 10^6$ and  when $e\leq t\leq 500$.\label{fig7}}
	\end{figure}
Hence, we obtain the following theorem.
	\begin{theorem}\label{20250613_1}For $t\geq e$,
	\begin{equation}\label{20241103_8}|\zeta(1+it)|\leq \frac{1}{2}\log t+0.6633.\end{equation}
	\end{theorem}
	
	\begin{remark}
	  In \cite{Patel2022},  Patel  directly cited the results of \cite{AriasdeReyna_2011} for bounds of   $b(0)$, $b(1)$, $c(0)$ and $c(1)$.  We have obtained exact  values for $b(0)$, $b(1)$, and obtained better bounds for $c(0)$ and $c(1)$ using direct computations. These improvements are not that significant for improving the bounds for $|\zeta(1+it)|$ when $t$ is large, as they only affect terms of order $t^{-\frac{1}{2}}$. The reason Patel only obtained the bound $\frac{1}{2}\log t+1.93$ is that he has only used $t_0=47.47$, as this is the point where the bound $\frac{1}{2}\log t+1.93$ is better than the Backlund's bound $\log t$. If we take $t_0=47.47$, we will get the bound $\frac{1}{2}\log t+1.4184$, which is  still better than Patel's result.
	\end{remark}
	 
		\bigskip
	\section{Concluding the Proof of Theorem \ref{maintheorem}}
	As in Section \ref{secexponential}, we are interested in a bound of the form $|\zeta(1+it)|\leq v\log t$ for all $t\geq e$. 
	As mentioned in Section \ref{sec2}, for $e\leq t\leq 10^6$, numerical calculations show that
	    $|\zeta(1+it)|/\log t$ achieves its maximum value 0.6443 when $t=17.7477$. 
	    To show that
	$|\zeta(1+it)|\leq 0.6443 \log t$ holds for all $t\geq e$, we can use Theorem \ref{20250613_1}.
	Notice that when $t\geq 100$,
	\[  \frac{1}{2}\log t+0.6633\leq  0.6443 \log t.\]
	
	\begin{figure}[ht]
		\begin{center}
			\includegraphics[scale=0.4]{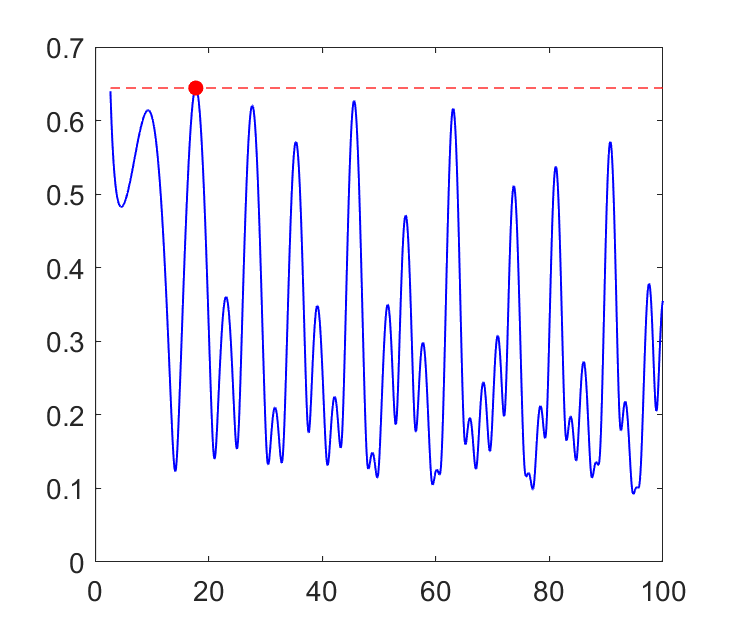}  \includegraphics[scale=0.4]{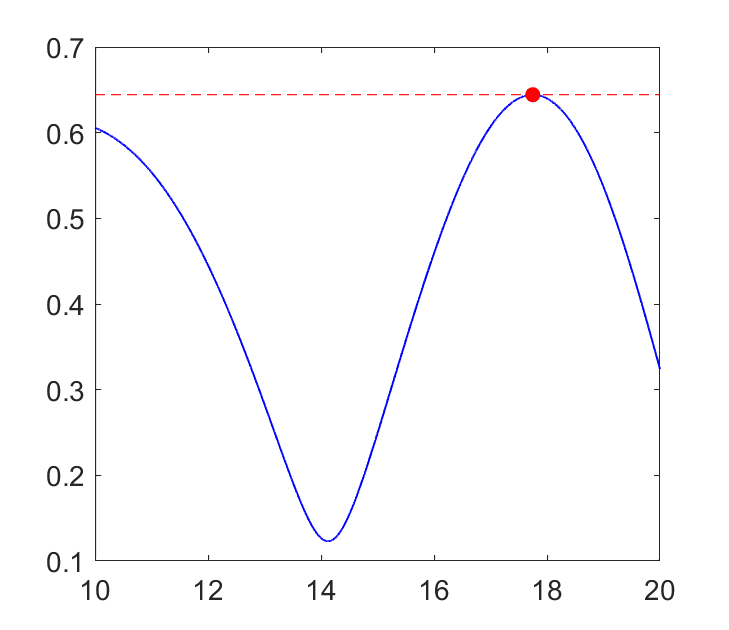}
		\end{center}
		\caption{The figures show $|\zeta(1+it)|/\log t$  when $e\leq t\leq 100$ and  when $10\leq t\leq 20$.\label{fig6}}
	\end{figure}
	
	In Figure \ref{fig6}, the graphs of  $|\zeta(1+it)|/\log t$  when $e\leq t\leq 100$ and  when $10\leq t\leq 20$ are shown. The values of $|\zeta(1+it)|$ are computed using the same code in Section \ref{sec2}. The graphs show that for $e\leq t\leq 100$, $|\zeta(1+it)|/\log t$ has a maximum value of $0.6443$. Thus,
we obtain the following theorem.
 \begin{theorem}When $t\geq e$, 
 \[|\zeta(1+it)|\leq 0.6443 \log t.\]
 The equality is achieved when $t=17.7477$. 
	    
 \end{theorem}

	
	\begin{table}[ht]\caption{The values of   $v$ \eqref{20250611_8} and $\tilde{v}$ \eqref{20250914_10} for given $t_0$.\label{tab3}}
\begin{tabular}{|c|c|c|}
\hline
	$t_0$	&	$v$	&	$\tilde{v}	$\\\hline
$	10^{5}	$&$	0.8134	$&$	0.5576	$\\\hline
$	10^{6}	$&$	0.7421	$&$	0.5480	$\\\hline
$	10^{7}	$&$	0.7003	$&$	0.5412	$\\\hline
$	10^{8}	$&$	0.6726	$&$	0.5360	$\\\hline
$	10^{9}	$&$	0.6526	$&$	0.5320	$\\\hline
$	10^{10}	$&$	0.6370	$&$	0.5288	$\\\hline
$	10^{11}	$&$	0.6245	$&$	0.5262	$\\\hline
$	10^{12}	$&$	0.6141	$&$	0.5240	$\\\hline
$	10^{13}	$&$	0.6053	$&$	0.5222	$\\\hline
$	10^{14}	$&$	0.5978	$&$	0.5206	$\\\hline
$	10^{15}	$&$	0.5912	$&$	0.5192	$\\\hline
$	10^{20}	$&$	0.5684	$&$	0.5144	$\\\hline
$	10^{30}	$&$	0.5456	$&$	0.5096	$\\\hline
$	10^{40}	$&$	0.5342	$&$	0.5072	$\\\hline
$	10^{50}	$&$	0.5274	$&$	0.5058	$\\\hline
$	10^{60}	$&$	0.5228	$&$	0.5048	$\\\hline
$	10^{70}	$&$	0.5196	$&$	0.5041	$\\\hline
$	10^{80}	$&$	0.5171	$&$	0.5036	$\\\hline
$	10^{90}	$&$	0.5152	$&$	0.5032	$\\\hline
$	10^{100}	$&$	0.5137	$&$	0.5029	$\\\hline
$	10^{200}	$&$	0.5068	$&$	0.5014	$\\\hline
$	\hspace{0.5cm}10^{300}	\hspace{0.5cm}$&$\hspace{0.5cm}	0.5046	\hspace{0.5cm}$&$	\hspace{0.5cm} 0.5010	\hspace{0.5cm}$\\\hline

\end{tabular}

\end{table}
	Finally, we   use Theorem \ref{20250613_1} to refine our results in Section \ref{secexponential} in the following way. For $t\geq t_0\geq e$,
	\[\frac{1}{2}+\frac{0.6633}{\log t}\leq \frac{1}{2}+\frac{0.6633}{\log t_0}.\]
	Hence, if \begin{equation}\label{20250914_10}\tilde{v}=\frac{1}{2}+\frac{0.6633}{\log t_0},\end{equation}then
	\[|\zeta(1+it)|\leq\frac{1}{2}\log t+0.6633\leq \tilde{v}\log t\hspace{1cm}\text{when}\;t\geq t_0.\]

In Table \ref{tab3}, we list down the values of $\tilde{v}$ and compare to the values of $v$ obtained in Section \ref{secexponential} for various values of  $t_0$. We see that the values of  $\tilde{v}$ are always smaller than the values of $v$. For $t_0=10^6$, we find that
\[|\zeta(1+it)|\leq 0.5480\log t\hspace{1cm}\text{when}\;t\geq 10^6.\]
Using the code in Section \ref{sec2} to numerically compute $|\zeta(1+it)|$ for $e\leq t\leq 10^6$, we find that when $  652.3704\leq t\leq 10^6$, we also have $|\zeta(1+it)|\leq 0.5480\log t$. This is sharp as the equality is achieved when $t=652.3704$. Therefore,  
\[|\zeta(1+it)|\leq 0.5480\log t\hspace{1cm}\text{when}\;t\geq  652.3704.\]
	 This result is better than $\frac{1}{2}\log t+0.6633$ when $652.3704\leq t\leq 10^6$.
This concludes the proof of our main results given in Theorem \ref{maintheorem}. 
	
	\bigskip
	\bibliographystyle{plain}
	\bibliography{ref}
\end{document}